\def \p{\partial}
\def \ds{\displaystyle}
\def \grad{\nabla}
\newcommand{\bey}{\begin{eqnarray}}
\newcommand{\eey}{\end{eqnarray}}
\newcommand{\beq}{\begin{equation}}
\newcommand{\eeq}{\end{equation}}
\theoremstyle{plain}
\newtheorem{thm}{\hspace{6mm}Theorem}[section]
\newtheorem{lem}{\hspace{6mm}Lemma}[section]
\theoremstyle{definition}
\theoremstyle{remark}
\newtheorem{exam}{\hspace{6mm}Example}[section]
\newtheorem{rem}{\hspace{6mm}Remark}[section]
\newenvironment{keywords}%
   {\begin{trivlist}\item[]{\bfseries\sffamily Key words:}~}
   {\end{trivlist}}
\newenvironment{AMS}%
   {\begin{trivlist}\item[]{\bfseries\sffamily AMS subject classifications:}~}
   {\end{trivlist}}
\title{%
Monotone finite difference schemes for anisotropic diffusion problems via nonnegative directional splittings%
\thanks{%
      This work was supported in~part by the National Science Foundation through grant DMS-1115118.%
   }%
}
\author{%
   Cuong~Ngo%
   \thanks{%
      Department of~Mathematics, University of~Kansas, Lawrence, KS~66045, U.S.A.
      (\href{mailto:cngo@ku.edu}{\nolinkurl{cngo@ku.edu}}).%
   }%
   \and
   Weizhang~Huang%
   \thanks{%
      Department of~Mathematics, University of~Kansas, Lawrence, KS~66045, U.S.A.
      (\href{mailto:whuang@ku.edu}{\nolinkurl{whuang@ku.edu}}).%
   }%
 }
\date{}
\begin{document}
\maketitle

\begin{abstract}
Nonnegative directional splittings of anisotropic diffusion operators in the divergence form are investigated.
Conditions are established for nonnegative directional splittings to hold in a neighborhood of an arbitrary
interior point. The result is used
to construct monotone finite difference schemes for the boundary value problem of anisotropic diffusion operators.
It is shown that such a monotone scheme can be constructed if the underlying diffusion matrix
is continuous on the closure of the physical domain and symmetric and uniformly positive definite on the domain,
the mesh spacing is sufficiently small, and the size of finite difference stencil is sufficiently large.
An upper bound for the stencil size is obtained, which is determined completely by the diffusion matrix.
Loosely speaking, the more anisotropic the diffusion matrix is, the larger stencil is required.
An exception is the situation with a strictly diagonally dominant diffusion matrix where a three-by-three stencil
is sufficient for the construction of a monotone finite difference scheme.
Numerical examples are presented to illustrate the theoretical findings.
\end{abstract}

\begin{keywords}
anisotropic diffusion, anisotropic coefficient, discrete maximum principle, monotone scheme, finite difference,
nonnegative directional splitting
\end{keywords}

\begin{AMS}
65N06, 65M06
\end{AMS}

\section{Introduction}
We consider the finite difference solution of the boundary value problem
\begin{equation} \label{had}
	\begin{cases}	
		-\nabla \cdot (\mathbb{D} \nabla u) = f, & \quad \text{in} \quad \Omega \\
		u = g, & \quad \text{on} \quad \p \Omega
	\end{cases}
\end{equation}
where $\Omega = (0,1)\times (0,1)$,
$f$ and $g$ are given functions, and the diffusion matrix,
\begin{equation} \label{eqn:D}
\mathbb{D} =\begin{bmatrix} a(x,y) & b(x,y) \\ b(x,y) & c(x,y) \end{bmatrix},
\end{equation}
is assumed to be continuous on $\overline{\Omega} \equiv \Omega \cup \partial \Omega$ and symmetric and
uniformly positive definite on $\Omega$. We are interested in the case where $\mathbb{D}$
depends on spatial location (heterogeneous diffusion) and has unequal eigenvalues at least on some portion
of $\Omega$ (anisotropic diffusion). For this case, (\ref{had}) is often called a heterogeneous anisotropic
diffusion problem. Anisotropic diffusion arises from various areas of science and engineering, including
plasma physics~\cite{GL09}, petroleum reservoir simulation~\cite{EAK01,MD06}, and image
processing~\cite{KM09,Weickert}. 

When a conventional method such as a finite element, a finite difference, or a finite volume method, is applied
to this problem, spurious oscillations can occur in the numerical solution. This difficulty can be
overcome by numerical schemes satisfying a discrete maximum principle (DMP). The development
and studies of DMP-preserving schemes have received considerable attention in the past
particularly in the context of finite element and finite volume methods, e.g., see
\cite{Ciarlet1970, CiarletRaviart1973,
Crumpton1995,Draganescu, GL09, Gunter2007, Gunter2005, Kuzmin2009, Huang1, Huang2, Li2007,
Lipnikov2007, Liska2008, Mlacnik2006, LePotier2005, LePotier2009-1, LePotier2009, Sharma2007,Stoyan1982,
Stoyan1986, Varga, YS08, Shu13}.
On the other hand, relatively less work has been done in the context of finite difference (FD) methods.
The major effort has been made to study monotone (or called positive-type or nonnegative-type) schemes,
which belong to a special class of DMP-preserving schemes with the coefficient matrix (or the Jacobian matrix
in the nonlinear case) of the corresponding algebraic system being an $M$-matrix (e.g., see \cite{Varga}
for the definition of $M$-matrices).
For example, Motzkin and Wasow \cite{Motzkin1953} prove that a monotone FD scheme exists
for any linear second-order elliptic problem when the mesh is sufficiently fine.
Greenspan and Jain \cite{Greenspan1965}, using nonnegative directional splittings (see the definition below),
propose a way to construct such schemes for elliptic operators in the nondivergence form
\begin{equation}
L[u] \equiv a(x,y) u_{xx} + 2 b(x,y) u_{xy} + c(x,y) u_{yy} \quad \text{ with }\quad b(x,y)^2 < a(x,y) c(x,y) .
\label{GJ-1}
\end{equation}
The results are extended to
elliptic problems in the divergence form (\ref{had}) by Weickert \cite{Weickert}.
Consistent and stable monotone FD schemes are shown to be convergent (to the solution) for
linear second-order elliptic problems by Bramble et al. \cite{Bramble1969} and (to the viscosity solution)
for nonlinear second-order degenerate elliptic or parabolic partial differential equations
by Barles and Souganidis \cite{Barles}. Oberman \cite{Oberman} studies degenerate elliptic schemes
(a special type of monotone scheme) for a general class of nonlinear degenerate elliptic problems.

The objective of this work is to study the construction of monotone FD schemes for problems in the form of (\ref{had})
using nonnegative directional splittings. Weickert's results are improved in two aspects.
We first present a condition under which nonnegative directional splittings hold for a neighborhood
of an arbitrary point. As we will see below, it is necessary to consider nonnegative directional splittings in a neighborhood
for the construction of monotone FD schemes.
We then extend the result to the situation where the coefficient $b(x,y)$ can change sign over the domain.
To be more specific, we recall that Weickert  \cite{Weickert} considers the directional splitting
\begin{equation}
    \nabla \cdot (\mathbb{D} \nabla u)(x_0,y_0) = \p_x (\gamma_0 \p_x u)(x_0,y_0)
    + \p_{\beta} (\gamma_1 \p_{\beta} u)(x_0,y_0) + \p_y (\gamma_2 \p_y u)(x_0,y_0),
\label{eqn:split0}
\end{equation}
where $(x_0,y_0)$ is a given point in $\Omega$,
$\p_{\beta} = (\sin(\beta), \cos(\beta))^T \cdot \nabla$, $\gamma_i = \gamma_i(x,y)$ ($i=0, 1, 2$) are functions,
and $\beta$ is a constant (angle) (but can vary with $(x_0,y_0)$). Weickert \cite[Page 90]{Weickert} shows that 
the condition
\begin{equation}
a(x_0,y_0)-b(x_0, y_0) \cot(\beta) \ge 0\quad \text{ and } \quad c(x_0,y_0)-b(x_0,y_0) \tan(\beta) \ge 0 
\label{weickert-1}
\end{equation}
is sufficient to guarantee that $\gamma_i$ ($i=0,1,2$) are nonnegative at $(x_0,y_0)$
(nonnegative directional splitting). Unfortunately, such a point-wise result is insufficient for the construction of
monotone FD schemes for the divergence form (\ref{eqn:split0}) since an FD discretization 
has to use the information of the coefficients $\gamma_i$
in a neighborhood of any mesh point. To avoid this difficulty, we develop a condition under which nonnegative
directional splittings hold in a neighborhood of a given point. Moreover, we study how the condition can
be extended to the situation where $b(x,y)$ changes sign over the domain.
An upper bound for the stencil size is obtained, which is completely determined by the diffusion matrix.
Loosely speaking, the more anisotropic the diffusion matrix is, the larger stencil is required.
An exception is the situation with a strictly diagonally dominant diffusion matrix where a three-by-three stencil
is sufficient for the construction of a monotone finite difference scheme (cf. Theorem~\ref{thm3.2}).

An outline of this paper is as follows. Nonnegative directional splittings for (\ref{had}) are studied in \S\ref{sec:splitting}.
Construction of monotone FD schemes using the results in \S\ref{sec:splitting} is discussed in \S\ref{sec:FD}.
Numerical examples are presented in \S\ref{sec:num}, followed by the conclusions in \S\ref{sec:conclude}.

\section{Nonnegative directional splitting} \label{sec:splitting}
\label{SEC:splitting}

In this section we consider nonnegative directional splittings for problems in the form of (\ref{had}).
Recall that our goal is to use the splittings to construct monotone FD schemes. Thus we are interested
in the situations where such a splitting holds for a neighborhood of a given point. We first consider
the case where $b(x,y)$ does not change sign and then extend the result to the sign changing situation.
Finally, we establish a condition under which the nonnegative directional
splitting holds uniformly over the domain for any neighborhood of a fixed size.
The result is needed in constructing monotone FD schemes on a uniform mesh.

\begin{lem}
\label{lem1}
Consider an arbitrary point $(x_0,y_0)$ in $\Omega$ and a neighborhood $\Omega_0$ of $(x_0,y_0)$. We assume
that $b(x,y)$ does not change sign on $\Omega_0$. If there exists a constant $\beta$ satisfying
\begin{equation}
\begin{cases}
\ds \sup_{\Omega_0} \dfrac{b(x,y)}{a(x,y)} \leq \tan\beta \leq \inf_{\Omega_0} \dfrac{c(x,y)}{b(x,y)},
&\quad \text{for } b \geq 0 \text{ and } b \not \equiv 0 \text{ on } \Omega_0   \\
\ds \sup_{\Omega_0} \dfrac{c(x,y)}{b(x,y)} \leq \tan\beta \leq \inf_{\Omega_0} \dfrac{b(x,y)}{a(x,y)} ,
&\quad \text{for } b \leq 0 \text{ and } b \not \equiv 0 \text{ on } \Omega_0  \\
\beta \text{ is any finite value with } \sin(\beta) \cos(\beta) \neq 0, &\quad \text{for } b \equiv 0 \text{ on } \Omega_0
\end{cases}
\label{lem1:angCond1}
\end{equation}
then we have the nonnegative directional splitting 
\begin{align} \label{lem1:identity}
  \grad \cdot \left( \mathbb{D} \, \grad u \right)(x_0,y_0) & = \p_x(\gamma_0 \, \p_x u)(x_0,y_0) + \p_{\beta}(\gamma_1 \, \p_{\beta} u)(x_0,y_0) + \p_y(\gamma_2 \, \p_y u)(x_0,y_0) ,
\end{align}
where the coefficients $\gamma_0$, $\gamma_1$, and $\gamma_2$, all nonnegative on $\Omega_0$, are given by
\begin{equation}
\label{lem1:coef}
\begin{cases}
\gamma_0(x,y) = a(x,y) - b(x,y) \cot\beta \geq 0 ,
\\
\gamma_1(x,y) = \dfrac{b(x,y)}{\cos\beta \sin\beta} \geq 0 ,  \\
\gamma_2(x,y) = c(x,y) - b(x,y) \tan\beta \geq 0 .
\end{cases}
\end{equation}
\end{lem}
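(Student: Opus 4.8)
The plan is to prove the lemma in two stages: first establish the splitting identity~(\ref{lem1:identity}) for the coefficients in~(\ref{lem1:coef}), independently of any sign hypothesis, and then verify that the angle condition~(\ref{lem1:angCond1}) forces $\gamma_0$, $\gamma_1$, $\gamma_2$ to be nonnegative on $\Omega_0$.

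For the identity, I would exploit the fact that each of the three terms on the right-hand side is itself in divergence form. Writing $\p_x u = \grad\cdot(e_1\,u)$ and $\p_y u = \grad\cdot(e_2\,u)$ with $e_1=(1,0)^T$, $e_2=(0,1)^T$, and $\p_\beta = w^T\grad$ for the constant unit vector $w$ associated with the angle $\beta$ — here it is essential that $\beta$ is constant, so that $w$ can be moved outside the outer derivative — the right-hand side of~(\ref{lem1:identity}) becomes $\grad\cdot\bigl[(\gamma_0 e_1 e_1^T+\gamma_1 w w^T+\gamma_2 e_2 e_2^T)\grad u\bigr]$. Hence the identity holds (pointwise, wherever the coefficients are differentiable) if and only if the matrix-valued functions satisfy $\mathbb{D}=\gamma_0 e_1 e_1^T+\gamma_1 w w^T+\gamma_2 e_2 e_2^T$ on $\Omega_0$. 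Comparing the three independent entries of this symmetric $2\times 2$ relation gives $\gamma_1\sin\beta\cos\beta=b$ together with two equations for the diagonal entries, whose unique solution is exactly~(\ref{lem1:coef}). The advantage of this viewpoint is that the consistency of the first-order (convection) terms is automatic: since both sides are genuine divergence-form operators with identical coefficient matrices, no separate check of the $\p_x u$ and $\p_y u$ contributions is needed.

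For the nonnegativity I would treat the three cases of~(\ref{lem1:angCond1}) separately, using that positive definiteness of $\mathbb{D}$ gives $a>0$, $c>0$, and $ac-b^2>0$ throughout $\Omega_0$. In the case $b\ge 0$, $b\not\equiv 0$, the condition forces $\tan\beta\ge\sup_{\Omega_0}(b/a)>0$, so $\beta$ lies in a quadrant with $\sin\beta\cos\beta>0$ and $\gamma_1=b/(\sin\beta\cos\beta)\ge 0$ at once; then at any point with $b>0$ the inequalities $\tan\beta\ge b/a$ and $\tan\beta\le c/b$ rearrange, using $a>0$ and $\tan\beta>0$, into $a-b\cot\beta\ge 0$ and $c-b\tan\beta\ge 0$, while at points with $b=0$ one has $\gamma_0=a>0$ and $\gamma_2=c>0$ directly. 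The case $b\le 0$, $b\not\equiv 0$ is the mirror image: now $\tan\beta\le\inf_{\Omega_0}(b/a)<0$, so $\sin\beta\cos\beta<0$ and again $\gamma_1\ge 0$, and the two inequalities in the second line of~(\ref{lem1:angCond1}) rearrange into the desired bounds after tracking the inequality reversals caused by multiplying through by the now-negative quantities $b$ and $\tan\beta$. The case $b\equiv 0$ is immediate: $\gamma_1\equiv 0$, $\gamma_0=a>0$, $\gamma_2=c>0$ for any $\beta$ with $\sin\beta\cos\beta\ne 0$.

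The only place demanding real care — and hence the main obstacle — is the sign bookkeeping in the $b\le 0$ case, where $c/b$ and $b/a$ are nonpositive and multiplying the defining inequalities by $b<0$ or $\tan\beta<0$ reverses their direction; one must also confirm that division by zero never occurs, by isolating the points where $b=0$ and treating them directly as above. Finally, I would emphasize that establishing the existence of an admissible $\beta$, i.e.\ the nonemptiness of the interval in~(\ref{lem1:angCond1}), is not part of this lemma, which takes that as a hypothesis; it is the subsequent results that guarantee such a $\beta$ exists on a sufficiently small neighborhood.
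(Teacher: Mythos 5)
Your proposal is correct and follows essentially the same route as the paper: both rewrite each directional term as $\grad\cdot(\gamma_i M_i\,\grad u)$ for a constant rank-one matrix $M_i$ (using that $\beta$ is constant), equate the resulting coefficient matrix to $\mathbb{D}$ entrywise to derive (\ref{lem1:coef}), and then read the signs of $\gamma_0,\gamma_2$ off (\ref{lem1:angCond1}). The only difference is cosmetic: for $\gamma_1\ge 0$ the paper detours through the eigen-decomposition of $\mathbb{D}$ to write $b=(\lambda_1-\lambda_2)\cos\psi\sin\psi$, whereas you argue directly from $b\ge 0$ (resp.\ $b\le 0$) together with the sign of $\sin\beta\cos\beta$ forced by the hypothesis, which is sufficient and slightly cleaner; your sign bookkeeping in the $b\le 0$ case and your separate treatment of points where $b=0$ are both sound.
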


\begin{proof}
For the situation with $b(x,y) \equiv 0$ on $\Omega_0$, it is obvious that (\ref{lem1:identity}) and (\ref{lem1:coef})
hold with any $\beta$ satisfying $\sin(\beta) \cos(\beta) \neq 0$.
For other situations, condition (\ref{lem1:angCond1}) implies that
$0 < \beta < \frac{\pi}{2}$ or $-\frac{\pi}{2} < \beta < 0$.
Since $\mathbb{D}(x,y)$ is symmetric and uniformly positive definite on $\Omega_0 \subset \Omega$,
we have $a(x,y) > 0$ and $c(x,y) > 0$ for each $(x,y) \in \Omega_0$.
For each $(x,y) \in \Omega_0$, we denote the eigenvalues of of $\mathbb{D}$ by $\lambda_1$ and $\lambda_2$
(without loss of generality we assume $\lambda_1 \ge \lambda_2$) and the angle formed by the principal
eigenvector and the $x$-axis by $\psi$. Then, the diffusion matrix at $(x,y)$ has the eigen-decomposition as
\begin{align}
\mathbb{D} & = \begin{bmatrix} a & b \\ b & c\end{bmatrix}
= \lambda_1 \begin{bmatrix} \cos(\psi) \\ \sin(\psi) \end{bmatrix} \begin{bmatrix} \cos(\psi) & \sin(\psi) \end{bmatrix}
+ \lambda_2 \begin{bmatrix} -\sin(\psi) \\ \cos(\psi) \end{bmatrix} \begin{bmatrix} -\sin(\psi) & \cos(\psi) \end{bmatrix}
\notag \\
& = \begin{bmatrix} \lambda_1 \cos^2 (\psi) + \lambda_2 \sin^2 (\psi) & (\lambda_1 - \lambda_2) \cos(\psi) \sin(\psi) \\
(\lambda_1 - \lambda_2) \cos(\psi) \sin(\psi) & \lambda_1 \sin^2 (\psi) + \lambda_2 \cos^2 (\psi) \end{bmatrix} .
\label{eqn:eigenDecomp}
\end{align}

In the following, we assume $b \geq 0$ on $\Omega_0$.
(The analysis for the case with $b \leq 0$ is similar.)
In this case, $\psi \in [0, \frac{\pi}{2}]$. We now show that $\grad \cdot (\mathbb{D} \grad u)$ can be written into
the form of (\ref{lem1:identity}) with nonnegative $\gamma_0$, $\gamma_1$, and $\gamma_2$.
By direct calculation, we have
\begin{align*}
\p_{\beta}(\gamma_1 \, \p_{\beta} u) & = \nabla \cdot \left ( \gamma_1 \begin{bmatrix} \cos^2\beta &  \cos\beta \sin\beta\\
\cos\beta \sin\beta & \sin^2\beta \end{bmatrix} \nabla u \right ),
\\
\p_x(\gamma_0 \, \p_x u) & = \nabla \cdot \left ( \gamma_0 \begin{bmatrix} 1 &  0\\
0 & 0 \end{bmatrix} \nabla u \right ),
\\
\p_y(\gamma_2 \, \p_y u) & = \nabla \cdot \left ( \gamma_2 \begin{bmatrix} 0 &  0\\
0 & 1 \end{bmatrix} \nabla u \right ) .
\end{align*}
Then, (\ref{lem1:identity}) is equivalent to
\[
\nabla \cdot \left( \mathbb{D} \, \nabla u \right) = \nabla \cdot \left (
\begin{bmatrix} \gamma_0 + \gamma_1 \cos^2\beta & \gamma_1 \cos\beta \sin\beta \\
\gamma_1 \cos\beta \sin\beta & \gamma_1 \sin^2\beta + \gamma_2 \end{bmatrix} \nabla u \right ),
\]
which requires
\[
\begin{bmatrix} a & b \\ b & c\end{bmatrix} = \begin{bmatrix} \gamma_0 + \gamma_1 \cos^2\beta & \gamma_1 \cos\beta \sin\beta \\
\gamma_1 \cos\beta \sin\beta & \gamma_1 \sin^2\beta + \gamma_2 \end{bmatrix} .
\]
This can be expressed into
\[
\begin{bmatrix} 1 & \cos^2\beta & 0\\ 0 & \cos\beta \sin\beta & 0 \\ 0 & \sin^2\beta & 1\end{bmatrix}
\begin{bmatrix} \gamma_0 \\ \gamma_1 \\ \gamma_2\end{bmatrix} = 
\begin{bmatrix} a \\ b \\ c \end{bmatrix} .
\]
Since $0 < \beta < \frac{\pi}{2}$, we can solve the above equation and get (\ref{lem1:coef}).
The condition (\ref{lem1:angCond1}) implies that $\gamma_0 \geq 0$ and $\gamma_2 \geq 0$ on $\Omega_0$.
From (\ref{eqn:eigenDecomp}), we have
\[
b = (\lambda_1 - \lambda_2) \cos\psi \sin\psi .
\]
Thus,
\begin{equation*} \gamma_1 = \frac{b}{\sin\beta \cos\beta} = \dfrac{(\lambda_1 - \lambda_2) \cos\psi \sin\psi}{\cos\beta \sin\beta} \geq 0\, ,
\end{equation*}
since $\psi \in [0,\frac{\pi}{2}]$ and $\beta \in (0, \frac{\pi}{2})$. 
\end{proof}

\vspace{10pt}

The above lemma shows that for the case where $b$ does not change sign,
(\ref{had}) has a nonnegative directional splitting on $\Omega_0$
if (\ref{lem1:angCond1}) is satisfied. In the following lemma, we consider
the general case where $b$ changes sign on $\Omega_0$. 
Hereafter, we use the superscripts ``$\frac{}{}^+$'' and ``$\frac{}{}^-$''
to indicate the regions associated with $b(x,y)>0$ and $b(x,y)<0$, respectively.
For example, we denote 
\[
\Omega_0^+ = \{ (x,y) \in \Omega_0 : b(x,y) > 0 \},\qquad
\Omega_0^- = \{ (x,y) \in \Omega_0 : b(x,y) < 0 \} .
\]

\vspace{5pt}

\begin{rem}
\label{rem2.1}
The set $\Omega_0^+$ or $\Omega_0^-$ may be empty. 
When this happens, many operations associated with such an empty in the following analysis may not make sense.
For this reason, we assume that the formulas with an empty set will be ignored.
This remark also applies to other empty sets.
\qed
\end{rem}

\begin{lem} 
\label{lem2}
Consider an arbitrary point $(x_0,y_0)$ in $\Omega$ and a neighborhood $\Omega_0$ of $(x_0,y_0)$. If there exist constants $\beta_1$ and $\beta_2$ satisfying
\begin{equation}
\begin{cases}
	\ds \sup_{\Omega_0^+} \dfrac{b(x,y)}{a(x,y)} < \tan\beta_1 < \inf_{\Omega_0^+} \dfrac{c(x,y)}{b(x,y)} , \\
	\ds \sup_{\Omega_0^-} \dfrac{c(x,y)}{b(x,y)} < \tan\beta_2 < \inf_{\Omega_0^-} \dfrac{b(x,y)}{a(x,y)} , 
\end{cases}
\label{lem2:angCond1}
\end{equation}
then we have the nonnegative directional splitting
\begin{align}
\grad \cdot \left( \mathbb{D} \, \grad u \right)(x_0,y_0) & =  \p_x(\gamma_0\, \p_x u)(x_0,y_0) + \p_{\beta_1}(\gamma_1^+ \, \p_{\beta_1} u)(x_0,y_0) \nonumber \\
&  \qquad + \; \p_{\beta_2}(\gamma_1^- \, \p_{\beta_2} u)(x_0,y_0) + \p_y(\gamma_2 \, \p_y u)(x_0,y_0) ,
\label{splitting-1}
\end{align}
where $\gamma_0(x,y)$, $\gamma_1^+(x,y)$, $\gamma_1^-(x,y)$, and $\gamma_2(x,y)$, all nonnegative
on $\Omega_0$, are given by
\begin{align}
\gamma_0(x,y) & = \begin{cases}
a(x,y) - b(x,y)\cot\beta_1,  & \quad \text{for} \quad  b(x,y) \geq 0 \\
a(x,y)- b(x,y) \cot\beta_2 , & \quad \text{for} \quad  b(x,y) < 0  
\end{cases}
\label{gamma0}
\\
\gamma_1^+(x,y) & =
\begin{cases}
\dfrac{b(x,y)}{\cos\beta_1 \sin\beta_1}, & \quad \text{for} \quad  b(x,y) \geq 0 \\
0, & \quad \text{for} \quad  b(x,y) < 0  
\end{cases}
\label{gamma1+}
\\
\gamma_1^-(x,y) & =
\begin{cases}
0 , & \quad \text{for} \quad  b(x,y) \geq 0 \\
\dfrac{b(x,y)}{\cos\beta_2 \sin\beta_2}, & \quad \text{for} \quad  b(x,y) < 0  
\end{cases}
\label{gamma1-}
\\
\gamma_2(x,y) & =
\begin{cases}
c(x,y) - b(x,y)\tan\beta_1 , & \quad \text{for} \quad  b(x,y) \geq 0 \\
c(x,y) - b(x,y) \tan\beta_2 , & \quad \text{for} \quad  b(x,y) < 0 .
\end{cases}
\label{gamma2}
\end{align}
\end{lem}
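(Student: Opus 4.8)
The plan is to reduce the statement to Lemma~\ref{lem1} applied separately on the two sign regions $\Omega_0^+$ and $\Omega_0^-$. The key observation is that the coefficients in \eqref{gamma1+}--\eqref{gamma1-} are arranged so that $\gamma_1^-$ vanishes wherever $b\ge 0$ and $\gamma_1^+$ vanishes wherever $b<0$; hence at every point of $\Omega_0$ exactly one of the two oblique terms $\partial_{\beta_1}(\gamma_1^+\partial_{\beta_1}u)$, $\partial_{\beta_2}(\gamma_1^-\partial_{\beta_2}u)$ survives, and the four-term splitting \eqref{splitting-1} collapses locally to a three-term splitting of the form already treated in Lemma~\ref{lem1}.

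First I would pin down the admissible ranges of $\beta_1$ and $\beta_2$. On $\Omega_0^+$ we have $a,b,c>0$, so $\sup_{\Omega_0^+}(b/a)>0$ and the first line of \eqref{lem2:angCond1} forces $\tan\beta_1>0$, i.e. $\beta_1\in(0,\tfrac{\pi}{2})$; on $\Omega_0^-$ we have $b<0$, so $\inf_{\Omega_0^-}(b/a)<0$ and the second line forces $\tan\beta_2<0$, i.e. $\beta_2\in(-\tfrac{\pi}{2},0)$. In particular $\sin\beta_i\cos\beta_i\neq0$, so the coefficients $\gamma_1^\pm$ are well defined; this is exactly where the strict inequalities in \eqref{lem2:angCond1} are needed. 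Next I would establish the pointwise matrix identity
\[
\mathbb{D}(x,y)=\gamma_0\begin{bmatrix}1&0\\0&0\end{bmatrix}
+\gamma_1^+\begin{bmatrix}\cos^2\beta_1&\cos\beta_1\sin\beta_1\\ \cos\beta_1\sin\beta_1&\sin^2\beta_1\end{bmatrix}
+\gamma_1^-\begin{bmatrix}\cos^2\beta_2&\cos\beta_2\sin\beta_2\\ \cos\beta_2\sin\beta_2&\sin^2\beta_2\end{bmatrix}
+\gamma_2\begin{bmatrix}0&0\\0&1\end{bmatrix}
\]
for every $(x,y)\in\Omega_0$, by case analysis on the sign of $b(x,y)$: when $b\ge0$ the third matrix term drops out and $(\gamma_0,\gamma_1^+,\gamma_2)$ are precisely the quantities \eqref{lem1:coef} with $\beta=\beta_1$, which solve the $3\times3$ linear system of that proof; when $b<0$ the second term drops out and $(\gamma_0,\gamma_1^-,\gamma_2)$ are the corresponding quantities with $\beta=\beta_2$. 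At points with $b=0$ both oblique terms vanish and the identity reduces to $\mathbb{D}=\mathrm{diag}(a,c)$. Since each operator on the right of \eqref{splitting-1} can be written as $\nabla\cdot(\,\cdot\,\nabla u)$ with the $\beta_i$ constant, summing gives $\nabla\cdot(M\nabla u)$ with $M$ equal to the matrix above; because $M=\mathbb{D}$ on all of $\Omega_0$, the operator identity holds there, and in particular at $(x_0,y_0)$.

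It remains to verify nonnegativity, which I would do on $\Omega_0^+$ and $\Omega_0^-$ separately. On $\Omega_0^+$, where $\beta_1\in(0,\tfrac{\pi}{2})$ and $a,b,c>0$, the condition gives $b/a<\tan\beta_1<c/b$ pointwise, whence $\gamma_0=a-b\cot\beta_1\ge0$, $\gamma_2=c-b\tan\beta_1\ge0$, and $\gamma_1^+=b/(\cos\beta_1\sin\beta_1)\ge0$. On $\Omega_0^-$ the signs require care: here $b<0$ and $\beta_2\in(-\tfrac{\pi}{2},0)$, so $\cot\beta_2<0$ and $\cos\beta_2\sin\beta_2<0$. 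Writing $\gamma_0=a\bigl(1-(b/a)/\tan\beta_2\bigr)$ and using $\tan\beta_2<b/a<0$ yields $0<(b/a)/\tan\beta_2<1$, hence $\gamma_0>0$; the bound $\tan\beta_2>c/b$ gives (after multiplying by $b<0$) $\gamma_2=c-b\tan\beta_2\ge0$; and $\gamma_1^-=b/(\cos\beta_2\sin\beta_2)>0$ as a ratio of two negatives.

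I expect the only genuinely delicate point to be this last sign bookkeeping on $\Omega_0^-$, where $b$, $\tan\beta_2$, $\cot\beta_2$, and $\cos\beta_2\sin\beta_2$ are all negative so that dividing reverses inequalities; everything else follows mechanically from Lemma~\ref{lem1} once the pointwise matrix identity is recorded. A secondary technical point worth flagging is that the $\gamma$'s are only piecewise defined across the zero set of $b$, so the cleanest justification of \eqref{splitting-1} is through the identity $M=\mathbb{D}$ on $\Omega_0$ rather than through term-by-term differentiation of the individual coefficients.
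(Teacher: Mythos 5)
Your proof is correct, but it follows a genuinely different route from the paper's. The paper does not verify the pointwise matrix identity directly. Instead it writes $\mathbb{D}=\mathbb{D}_1+\mathbb{D}_2$, where $\mathbb{D}_1$ is a slightly perturbed copy of $\mathbb{D}$ (entries $a-\epsilon$, $b+\epsilon/M$, $c-\epsilon$) on $\{b\ge 0\}$ and $\epsilon I$ on $\{b<0\}$, with $\mathbb{D}_2$ the complementary piece; for $\epsilon$ small and $M$ large both summands are positive definite, $\mathbb{D}_1$ has nonnegative off-diagonal entry and $\mathbb{D}_2$ has strictly negative off-diagonal entry everywhere on $\Omega_0$, so Lemma~\ref{lem1} applies to each summand separately (this is where the \emph{strict} inequalities in \eqref{lem2:angCond1} are actually used --- they leave room for the $\epsilon$-perturbation), and the formulas \eqref{gamma0}--\eqref{gamma2} are recovered by letting $\epsilon\to 0$ at the end. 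Your argument dispenses with the regularization and the limit entirely: you record the rank-one decomposition of $\mathbb{D}$ pointwise by a case split on the sign of $b$, observe that at each point exactly one oblique term survives so the verification reduces to the algebra already done in Lemma~\ref{lem1} with $\beta=\beta_1$ or $\beta=\beta_2$, and then check the signs directly (your careful bookkeeping on $\Omega_0^-$, where $b$, $\tan\beta_2$ and $\cos\beta_2\sin\beta_2$ are all negative, is exactly right). What the paper's route buys is the reuse of Lemma~\ref{lem1} as a black box, at the cost of the $\epsilon$--$M$ bookkeeping; what your route buys is brevity and transparency, and in particular it makes clear that the coefficients \eqref{gamma0}--\eqref{gamma2} match across $\{b=0\}$ and hence are continuous there. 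Your closing caveat --- that the four-term splitting is best justified through the single identity $M=\mathbb{D}$ rather than term-by-term differentiation of piecewise-defined coefficients --- is well taken, and in fact applies with more force to the paper's own proof, whose intermediate matrices $\mathbb{D}_1$ and $\mathbb{D}_2$ are genuinely discontinuous across $\{b=0\}$.
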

\begin{proof}
For any given constants $\epsilon > 0$ and $M > 0$ we define a decompose of the diffusion matrix as
\[
\mathbb{D}(x,y) = \mathbb{D}_1(x,y) + \mathbb{D}_2(x,y),
\]
where
\begin{align}
\mathbb{D}_1 & = \begin{bmatrix} a_1 & b_1\\ b_1 & c_1 \end{bmatrix} 
\equiv \begin{cases}\begin{bmatrix}a - \epsilon & b + {\epsilon/M} \\ b + {\epsilon/M} & c - \epsilon \end{bmatrix},
& \quad \text{ for } \quad b(x,y) \geq 0 \\ \\
\begin{bmatrix} \epsilon & 0 \\ 0 & \epsilon \end{bmatrix}, & \quad \text{ for } \quad b(x,y) < 0 
 \end{cases}
\label{def:D1}
\\
\mathbb{D}_2 & = \begin{bmatrix} a_2 & b_2 \\ b_2 & c_2 \end{bmatrix} 
\equiv \begin{cases}\begin{bmatrix} \epsilon & -{\epsilon/M} \\ -{\epsilon/M} & \epsilon \end{bmatrix} ,
& \quad \text{ for } \quad b(x,y) \geq 0 \\ \\
\begin{bmatrix} a-\epsilon & b \\ b & c-\epsilon \end{bmatrix},
& \quad \text{ for } \quad b(x,y) < 0 .
\end{cases}
\label{def:D2}
\end{align}
Then we have
\begin{equation}
\label{eqn:D_splitting} \grad \cdot (\mathbb{D} \grad u) = \grad \cdot (\mathbb{D}_1 \grad u)
+ \grad \cdot (\mathbb{D}_2 \grad u) . 
\end{equation}
Since
\[
b_1  \geq 0 \quad \text{and} \quad b_2 < 0, \quad \forall (x,y) \in \Omega_0 
\]
we can apply Lemma~\ref{lem1} to each term on the right-hand side of (\ref{eqn:D_splitting}).
To this end, we notice that  (\ref{lem2:angCond1}) implies, for sufficiently small $\epsilon$ and large $M$, that
\begin{align}
\label{eMcond1}
\ds \sup_{\Omega_0^+} \dfrac{b}{a} < \sup_{\Omega_0^+} \dfrac{b+{\epsilon/M}}{a-\epsilon} 
< \tan\beta_1 < \inf_{\Omega_0^+} \dfrac{c-\epsilon}{b+{\epsilon/M}} 
< \inf_{\Omega_0^+} \dfrac{c}{b} ,
\\
\label{eMcond2}
\ds \sup_{\Omega_0^-} \dfrac{c}{b} < \sup_{\Omega_0^-} \dfrac{c-\epsilon}{b} 
< \tan\beta_2 < \inf_{\Omega_0^-} \dfrac{b}{a - \epsilon}  < \inf_{\Omega_0^-} \dfrac{b}{a}   .
\end{align}
From (\ref{def:D1}), we have
\[
\sup_{\Omega_0^-} \frac{b_1}{a_1} = 0 \le \sup_{\Omega_0^+} \frac{b_1}{a_1} = \sup_{\Omega_0^+} \frac{b+{\epsilon/M}}{a-\epsilon} ,
\qquad 
\inf_{\Omega_0^-} \frac{c_1}{b_1} = + \infty >  \inf_{\Omega_0^+} \frac{c_1}{b_1} = \inf_{\Omega_0^+} \frac{c-\epsilon}{b+{\epsilon/M}} .
\]
Combining this with (\ref{eMcond1}), we get
\[
\sup_{\Omega_0} \frac{b_1}{a_1} = \sup_{\Omega_0^+} \frac{b_1}{a_1}
< \tan{\beta_1} < \inf_{\Omega_0} \frac{c_1}{b_1} = \inf_{\Omega_0^+} \frac{c_1}{b_1} .
\]
Thus, the condition of Lemma \ref{lem1} is satisfied, which implies that
\begin{equation}
\label{eqn:D1}
\grad \cdot \left( \mathbb{D}_1 \, \grad u \right)
= \p_x(\gamma_{1,0} \, \p_x u) + \p_{\beta_1}(\gamma_{1,1} \, \p_{\beta_1} u) + \p_y(\gamma_{1,2} \, \p_y u) ,
\end{equation}
where $\gamma_{1,0}$, $\gamma_{1,1}$, and $\gamma_{1,2}$ are nonnegative and given by
\begin{align*}
\gamma_{1,0} & = 
\begin{cases}
(a-\epsilon) - (b+{\epsilon/M})\cot(\beta_1),  & \quad \text{for} \quad  b \geq 0 \\
\epsilon,  & \quad \text{for} \quad  b < 0 
\end{cases} \\
\gamma_{1,1} & =
\begin{cases}
\dfrac{b+{\epsilon/M}}{\cos\beta_1 \sin\beta_1},  & \quad \text{for} \quad  b \geq 0 \\
0,  & \quad \text{for} \quad  b < 0  
\end{cases} \\
\gamma_{1,2} & =
\begin{cases}
(c-\epsilon) - (b+{\epsilon/M})\tan\beta_1,  & \quad \text{for} \quad  b \geq 0 \\
\epsilon,  & \quad \text{for} \quad  b < 0  .
\end{cases}
\end{align*}

Similarly, for sufficiently large $M$, from (\ref{def:D2}) we have
\[
\sup_{\Omega_0^+} \frac{c_2}{b_2} = - M \le \sup_{\Omega_0^-} \frac{c_2}{b_2} = \sup_{\Omega_0^-} \dfrac{c-\epsilon}{b} ,
\qquad 
\inf_{\Omega_0^+} \frac{b_2}{a_2} = - \frac{1}{M} \ge \inf_{\Omega_0^-} \frac{b_2}{a_2} = \inf_{\Omega_0^-} \dfrac{b}{a - \epsilon}.
\]
Combining this with (\ref{eMcond2}) gives
\[
\sup_{\Omega_0} \frac{c_2}{b_2} = \sup_{\Omega_0^-} \frac{c_2}{b_2}
< \tan{\beta_2} < \inf_{\Omega_0} \frac{b_2}{a_2} = \inf_{\Omega_0^-} \frac{b_2}{a_2} .
\]
Then from Lemma \ref{lem1} we have
\begin{equation}
\label{eqn:D2}
\grad \cdot \left( \mathbb{D}_2 \, \grad u \right) = \p_x(\gamma_{2,0} \, \p_x u) + \p_{\beta_2}(\gamma_{2,1} \, \p_{\beta_2} u) + \p_y(\gamma_{2,2} \, \p_y u) ,
\end{equation}
where $\gamma_{2,0}$, $\gamma_{2,1}$, and $\gamma_{2,2}$,  all nonnegative, are given by
\begin{align*}
\gamma_{2,0} & = 
\begin{cases}
\epsilon + {\epsilon/M} \cot\beta_2,  & \quad \text{for} \quad  b \geq 0 \\\
(a-\epsilon) - b \cot\beta_2,   & \quad \text{for} \quad  b < 0  
\end{cases} \\
\gamma_{2,1} & =
\begin{cases}
\dfrac{-{\epsilon/M}}{\cos\beta_2 \sin\beta_2},  & \quad \text{for} \quad  b \geq 0 \\
\dfrac{b}{\cos\beta_2 \sin\beta_2},  & \quad \text{for} \quad  b < 0  
\end{cases} \\
\gamma_{2,2} & =
\begin{cases}
\epsilon + {\epsilon/M} \tan\beta_2 , & \quad \text{for} \quad  b \geq 0 \\
(c-\epsilon) - b \tan\beta_2 , & \quad \text{for} \quad  b < 0 .
\end{cases}
\end{align*}

By combining (\ref{eqn:D_splitting}), (\ref{eqn:D1}), and (\ref{eqn:D2}), we get
\[
\grad \cdot (\mathbb{D} \grad u) 
 =  \p_x(\gamma_0\, \p_x u) + \p_{\beta_1}(\gamma_1^+ \, \p_{\beta_1} u) + \p_{\beta_2}
 (\gamma_1^- \, \p_{\beta_2} u) + \p_y(\gamma_2 \, \p_y u) \, ,
\]
where $\gamma_0$, $\gamma_1^+$, $\gamma_1^-$, and $\gamma_2$, all nonnegative, are given by
\begin{align*}
\gamma_0 = \gamma_{1,0} + \gamma_{2,0} & = 
\begin{cases}
a - (b+{\epsilon/M})\cot\beta_1 + {\epsilon/M} \cot\beta_2 , & \quad \text{for} \quad  b \geq 0 \\
a - b \cot\beta_2 ,  & \quad \text{for} \quad  b < 0  
\end{cases} \\
\gamma_2 = \gamma_{1,2} + \gamma_{2,2} & =
\begin{cases}
c - (b+{\epsilon/M})\tan\beta_1 + {\epsilon/M} \tan\beta_2 , & \quad \text{for} \quad  b \geq 0 \\
c - b \tan\beta_2 , & \quad \text{for} \quad  b < 0  
\end{cases} \\
\gamma_1^+ = \gamma_{1,1} & =
\begin{cases}
\dfrac{b+{\epsilon/M}}{\cos\beta_1 \sin\beta_1}, & \quad \text{for} \quad  b \geq 0 \\
0,  & \quad \text{for} \quad  b < 0  
\end{cases} \\
\gamma_1^- = \gamma_{2,1} & =
\begin{cases}
\dfrac{-{\epsilon/M}}{\cos\beta_2 \sin\beta_2},  & \quad \text{for} \quad  b \geq 0 \\
\dfrac{b}{\cos\beta_2 \sin\beta_2},  & \quad \text{for} \quad  b < 0 .
\end{cases}
\end{align*}
Since the above result holds for any sufficiently small $\epsilon$, we take the limit as 
$\epsilon \to 0$ and obtain the conclusion of the lemma.
\end{proof}

\vspace{5pt}

In the next lemma, we show that (\ref{lem2:angCond1}) can be satisfied uniformly over $\Omega$
under a reasonable assumption on the diffusion matrix $\mathbb{D}$. For a positive number $R$, we denote
\[
B_R(x_0,y_0)  = \{ (x,y) \in \Omega : |(x,y) - (x_0,y_0)| < R \} .
\]

\begin{lem}
\label{lem3}
Assume that $\Omega \subset \mathbb{R}^2 $ is a bounded domain and $\mathbb{D}$ is continuous
on $\overline{\Omega} \equiv \Omega \cup \partial \Omega$ and symmetric and uniformly positive definite
on $\Omega$. Then, there exists $R > 0$ such that for any $(x_0,y_0) \in \Omega$,
\begin{align}
& \sup_{B_R^+(x_0,y_0)} \frac{b(x,y)}{a(x,y)} + \frac{\overline{\alpha}}{3\alpha}
\le \inf_{B_R^+(x_0,y_0)} \frac{c(x,y)}{b(x,y)},
\label{lem3-1}\\
& \sup_{B_R^-(x_0,y_0)} \frac{c(x,y)}{b(x,y)} + \frac{\overline{\alpha}}{3\alpha}
\le \inf_{B_R^-(x_0,y_0)} \frac{b(x,y)}{a(x,y)} ,
\label{lem3-2}
\\
& \sup_{B_R^+(x_0,y_0)} \frac{b(x,y)}{c(x,y)} + \frac{\overline{\alpha}}{3\alpha}
\le \inf_{B_R^+(x_0,y_0)} \frac{a(x,y)}{b(x,y)},
\label{lem3-3}\\
& \sup_{B_R^-(x_0,y_0)} \frac{a(x,y)}{b(x,y)} + \frac{\overline{\alpha}}{3\alpha}
\le \inf_{B_R^-(x_0,y_0)} \frac{b(x,y)}{c(x,y)} ,
\label{lem3-4}
\end{align}
where
\begin{align}
\overline{\alpha} & = \inf_{\Omega} \left ( a(x, y) c(x,y) - b(x,y)^2\right ),
\label{alpha-1}
\\
\alpha & =  \max\left (\sup_{\Omega} a(x,y) (|b(x,y)|+1),\; \sup_{\Omega} c(x,y) (|b(x,y)|+1)\right ) .
\label{alpha-2}
\end{align}
\end{lem}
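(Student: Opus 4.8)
The plan is to reduce each of the four inequalities to a single two-point estimate and prove it in two stages: first a pointwise spectral gap of size $\overline{\alpha}/\alpha$, then an absorption of the coefficients' oscillation over a small ball via uniform continuity, the factor $1/3$ being deliberately smaller than $1$ to leave room. Two preliminary facts will be recorded. Since $\mathbb{D}$ is uniformly positive definite on $\Omega$, its eigenvalues are bounded below by a positive constant, so $ac-b^2=\det\mathbb{D}$ is as well and $\overline{\alpha}>0$. Since $a,b,c$ are continuous on the compact set $\overline{\Omega}$, they are bounded and uniformly continuous; hence $0<\alpha<\infty$, and because $|b|+1\ge 1$ we also have $a\le\alpha$, $c\le\alpha$, $a(|b|+1)\le\alpha$, and $c(|b|+1)\le\alpha$ on $\Omega$.

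For the pointwise gap I would fix a point with $b>0$ (so $a,c>0$) and compute
\[
\frac{c}{b}-\frac{b}{a}=\frac{ac-b^2}{ab}\ge\frac{\overline{\alpha}}{ab}\ge\frac{\overline{\alpha}}{\alpha},
\]
using $ab=a|b|\le a(|b|+1)\le\alpha$. Interchanging $a$ and $c$ gives $\frac{a}{b}-\frac{b}{c}\ge\overline{\alpha}/\alpha$, now bounding $cb\le c(|b|+1)\le\alpha$; this is precisely why $\alpha$ is the maximum of two suprema. The two $b<0$ cases, $\frac{b}{a}-\frac{c}{b}\ge\overline{\alpha}/\alpha$ and $\frac{b}{c}-\frac{a}{b}\ge\overline{\alpha}/\alpha$, follow identically, the negative sign of $ab$ (resp. $cb$) being cancelled by that of the numerator. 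Thus at every point the relevant ratios are separated by at least $\overline{\alpha}/\alpha$.

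The main step is to pass from a point to the supremum and infimum over $B_R^+$; for (\ref{lem3-1}) this means showing $\frac{c(Q)}{b(Q)}-\frac{b(P)}{a(P)}\ge\frac{\overline{\alpha}}{3\alpha}$ for all $P,Q\in B_R^+(x_0,y_0)$. The obstacle is that the ratios are not uniformly continuous: near the zero set of $b$ they are unbounded, so one cannot apply uniform continuity to $b/a$ or $c/b$ directly. I would circumvent this by clearing denominators and working with the bilinear expression $c(Q)a(P)-b(P)b(Q)$, which is uniformly continuous. Since $a(P),b(Q)>0$, the claim is equivalent to $c(Q)a(P)-b(P)b(Q)\ge\frac{\overline{\alpha}}{3\alpha}a(P)b(Q)$. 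Writing $c(Q)=c(P)+\Delta c$, $b(Q)=b(P)+\Delta b$ and using $c(P)a(P)-b(P)^2\ge\overline{\alpha}$, the left side is at least $\overline{\alpha}-\epsilon\sup_{\Omega}(a+|b|)$ and the right side at most $\frac{\overline{\alpha}}{3\alpha}\big(a(P)b(P)+a(P)|\Delta b|\big)\le\frac{\overline{\alpha}}{3}(1+\epsilon)$, where $\epsilon$ bounds $|\Delta b|$ and $|\Delta c|$. Uniform continuity of $a,b,c$ supplies, for any prescribed $\epsilon>0$, a $\delta>0$ independent of $(x_0,y_0)$ with $|P-Q|<\delta$ forcing this bound; taking $R=\delta/2$ ensures $|P-Q|<2R=\delta$ for $P,Q\in B_R(x_0,y_0)$. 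Choosing $\epsilon$ small enough that $\overline{\alpha}-\epsilon\sup_{\Omega}(a+|b|)\ge\frac{\overline{\alpha}}{3}(1+\epsilon)$---possible exactly because the full gap $\overline{\alpha}$ strictly exceeds the reduced target $\overline{\alpha}/3$---then delivers (\ref{lem3-1}) with $R$ independent of $(x_0,y_0)$.

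The other three inequalities go the same way: (\ref{lem3-3}) by interchanging $a$ and $c$, and (\ref{lem3-2}), (\ref{lem3-4}) by repeating the denominator-clearing argument on $B_R^-$, where $b<0$ flips the signs of both the cleared denominator and the numerator, keeping the quotient positive. Taking the smallest of the four radii yields a single $R>0$ valid for all four inequalities and all $(x_0,y_0)\in\Omega$. The only genuine difficulty is the non-uniform continuity of the ratios, resolved by reducing to the bilinear numerator; everything else is bookkeeping with the uniform modulus of continuity and the bounds $\overline{\alpha}>0$, $\alpha<\infty$.
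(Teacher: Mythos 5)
Your proof is correct, and while it shares the paper's overall skeleton---the pointwise gap $\overline{\alpha}/\alpha$ extracted from $ac-b^2\ge\overline{\alpha}$ together with $a|b|\le\alpha$ and $c|b|\le\alpha$, then uniform continuity on the compact set $\overline{\Omega}$ to absorb the oscillation over a small ball at the cost of shrinking the gap to $\overline{\alpha}/(3\alpha)$, then symmetry in $a,c$ and a minimum over four radii---it resolves the one genuine difficulty by a different device. The paper confronts the same obstacle you identify (non-uniform continuity of $F=c/b$ near the zero set of $b$) by truncation: it introduces the cutoff functions $F^{\pm}$ of (\ref{F-1}) and (\ref{F-2}), equal to $F$ where $b$ has the appropriate sign and $|F|<M$ (with $M=\sup_{\Omega}|G|+\overline{\alpha}/\alpha$) and equal to $\pm M$ elsewhere; these are continuous on $\overline{\Omega}$, still dominate or are dominated by $G\pm\overline{\alpha}/\alpha$ on all of $\Omega$, and bound $F$ from the correct side on $B_R^{\pm}$, so uniform continuity is applied to $F^{\pm}$ and $G$ themselves. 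You instead clear denominators and prove the two-point bilinear estimate $c(Q)a(P)-b(P)b(Q)\ge\frac{\overline{\alpha}}{3\alpha}\,a(P)b(Q)$, invoking uniform continuity only of the entries $a$, $b$, $c$; your bookkeeping (left side at least $\overline{\alpha}-\epsilon\sup_{\Omega}(a+|b|)$, right side at most $\frac{\overline{\alpha}}{3}(1+\epsilon)$, and $\epsilon$ chosen so that the former dominates, which is possible since $\overline{\alpha}>\overline{\alpha}/3$) checks out. Your route is arguably more elementary and yields a radius $R$ expressed directly through the modulus of continuity of $a$, $b$, $c$, whereas the paper's construction produces the auxiliary functions $F^{\pm}$ and $G$ that it reuses in Remark~\ref{rem2.2} and formula (\ref{R-1}) to make $R$ computable from their Lipschitz constants. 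The only items to add to a full write-up are the convention for empty $B_R^{\pm}$ (the paper's Remark~\ref{rem2.1}) and the one-line passage from the two-point inequality, valid for all $P,Q$, to the $\sup$/$\inf$ form of (\ref{lem3-1})--(\ref{lem3-4}).
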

\begin{proof}
The inequalities (\ref{lem3-3}) and (\ref{lem3-4}) can be obtained from (\ref{lem3-1}) and (\ref{lem3-2})
by the symmetry between $a$ and $c$.
To prove (\ref{lem3-1}) and (\ref{lem3-2}), we first notice that the uniform positive definiteness of $\mathbb{D}$
implies that $\overline{\alpha} > 0$ and 
\[
a(x, y) c(x,y) - b(x,y)^2 \ge \overline{\alpha}, \quad \forall (x, y) \in \Omega .
\] 
Dividing both sides by $a b$, we have
\begin{align*}
& \frac{c(x,y)}{b(x, y)} \ge \frac{b(x,y)}{a(x, y)} + \frac{ \overline{\alpha} }{a(x,y) b(x,y)}, \quad \text{ for } b(x,y) > 0\\
& \frac{c(x,y)}{b(x, y)} \le \frac{b(x,y)}{a(x, y)} + \frac{ \overline{\alpha} }{a(x,y) b(x,y)}, \quad \text{ for }b(x,y) < 0 .
\end{align*}
Denoting 
\begin{equation}
F(x, y) = \frac{c(x,y)}{b(x, y)}, \quad G = \frac{b(x,y)}{a(x, y)},
\label{FG-1}
\end{equation}
from the definition of $\alpha$ (\ref{alpha-1}) we have
\begin{align*}
& F(x,y) \ge G(x, y) + \frac{\overline{\alpha}}{\alpha}, \quad \text{ for } b(x,y) > 0\\
& F(x,y) \le G(x,y) - \frac{\overline{\alpha}}{\alpha}, \quad \text{ for }b(x,y) < 0 .
\end{align*}
Define
\begin{equation}
M   = \sup_{\Omega} | G(x, y) | + \frac{\overline{\alpha}}{\alpha}.
\label{GM-1}
\end{equation}

To prove (\ref{lem3-1}), we introduce a cut-off function of $F$ as
\begin{equation}
F^+ (x,y) = 
	\begin{cases}
		F(x,y),	& \quad \text{ for } \quad b(x,y) > 0 \text{ and }  F(x,y) < M \\
		M, 	        & \quad \text{ for } \quad b(x,y) > 0 \text{ and }  F(x,y) \ge M \\
		M,            & \quad \text{ for } \quad b(x,y) \le 0 .
	\end{cases}
\label{F-1}
\end{equation}
This function is continuous on $\overline{\Omega}$ and has the following properties,
\begin{align}
& F^+(x, y) \le F(x,y), \quad \text{ for } b(x,y) > 0
\label{eq:F^+-1}\\
& F^+(x, y) \ge G(x, y) + \frac{\overline{\alpha}}{\alpha}, \quad \text{ for } (x,y) \in \Omega .
\label{eq:F^+-2}
\end{align}
Since $\Omega$ is bounded, $F^+$ and $G$ are uniformly continuous on $\overline{\Omega}$. 
Thus, there exists a constant $R_1 > 0$ (which depends on $F^+$, $G$, $\overline{\alpha}$,
and $\alpha$ and therefore the diffusion matrix $\mathbb{D}$ but not on $(x_0,y_0)$) such that
\begin{equation}
F^+(x,y) \ge F^+(x_0, y_0) - \frac{\overline{\alpha}}{3 \alpha},
\quad G(x, y) \le G(x_0, y_0) + \frac{\overline{\alpha}}{3\alpha},
\qquad \forall (x,y) \in B_{R_1}(x_0, y_0)
\label{FG-2}
\end{equation}
which in turn implies that
\[
\inf_{B_{R_1}(x_0,y_0) } F^+(x, y) \ge F^+(x_0, y_0) - \frac{\overline{\alpha}}{3 \alpha},\quad
\sup_{B_{R_1}(x_0,y_0) } G(x,y) \leq G(x_0, y_0) + \frac{\overline{\alpha}}{3 \alpha} .
\]
Combining these results with (\ref{eq:F^+-2}), we have
\[
\sup_{B_{R_1}(x_0,y_0) } G(x, y) \le G(x_0, y_0) + \frac{\overline{\alpha}}{3 \alpha}
\le F^+(x_0, y_0) -\frac{2 \overline{\alpha}}{3 \alpha}
\le \inf_{B_{R_1}(x_0, y_0) } F^+(x,y) - \frac{\overline{\alpha}}{3 \alpha}
\]
or
\[ 
\sup_{B_{R_1}(x_0, y_0) } G(x, y) \le \inf_{B_{R_1}(x_0, y_0)} F^+(x,y) - \frac{\overline{\alpha}}{3 \alpha}.
\]
From this and (\ref{eq:F^+-1}), we have
\begin{align*}
\sup_{B_{R_1}^+(x_0, y_0) } G(x, y) & \le \sup_{B_{R_1}(x_0, y_0) } G(x, y) 
 \le \inf_{B_{R_1}(x_0, y_0)} F^+(x,y) - \frac{\overline{\alpha}}{3 \alpha} 
\\
& \le \inf_{B_{R_1}^+(x_0, y_0)} F^+(x,y) - \frac{\overline{\alpha}}{3 \alpha} 
 \le \inf_{B_{R_1}^+(x_0, y_0)} F(x,y) - \frac{\overline{\alpha}}{3 \alpha},
\end{align*}
which gives (\ref{lem3-1}) (with $R = R_1$).

Similarly, we define
\begin{equation}
F^- (x,y) = 
	\begin{cases}
		F(x,y),	& \quad \text{ for } \quad b(x,y) < 0 \text{ and }  F(x,y) > -M \\
		-M, 	        & \quad \text{ for } \quad b(x,y) < 0 \text{ and }  F(x,y) \le  -M \\
		-M,            & \quad \text{ for } \quad b(x,y) \ge 0 .
	\end{cases}
\label{F-2}
\end{equation}
It is continuous on $\overline{\Omega}$ and satisfies
\begin{align}
& F^-(x, y) \ge F(x,y), \quad \text{ for } b(x,y) < 0
\label{eq:F^--1}\\
& F^-(x, y) \le G(x, y) - \frac{\overline{\alpha}}{\alpha}, \quad \text{ for } (x,y) \in \Omega .
\label{eq:F^--2}
\end{align}
From the uniform continuity of $F^-$ and $G$ on $\overline{\Omega}$,
there exists a constant $R_2 > 0$ (which depends only on the diffusion matrix $\mathbb{D}$)
such that
\begin{equation}
F^-(x,y) \le F^-(x_0, y_0) + \frac{\overline{\alpha}}{3 \alpha},
\quad G(x, y) \ge G(x_0, y_0) - \frac{\overline{\alpha}}{3 \alpha},\quad \forall (x, y)
\in B_{R_2}(x_0, y_0) .
\label{FG-3}
\end{equation}
Thus,
\[
\sup_{B_{R_2}(x_0, y_0)} F^-(x, y) \le F^-(x_0, y_0) + \frac{\overline{\alpha}}{3 \alpha}
\le G(x_0, y_0) - \frac{2\overline{\alpha}}{3 \alpha} \le \inf_{B_{R_2}(x_0, y_0)} G(x, y) - \frac{\overline{\alpha}}{3 \alpha}
\]
or
\[
\sup_{B_{R_2}(x_0, y_0)} F^-(x, y) \le \inf_{B_{R_2}(x_0, y_0)} G(x, y) - \frac{\overline{\alpha}}{3 \alpha}.
\]
Then, the inequality (\ref{lem3-2}) (with $R = R_2$) follows from this and (\ref{eq:F^--1}).

Using the symmetry between $a$ and $c$ we can show from (\ref{lem3-1}) and (\ref{lem3-2})
that (\ref{lem3-3}) and (\ref{lem3-4}) hold with some positive numbers $R_3$ and $R_4$, respectively.
Taking $R = \min( R_1, R_2, R_3, R_4)$, we have proven that (\ref{lem3-1}) -- (\ref{lem3-4}) hold
for the same $R$.
\end{proof}

\vspace{5pt}

\begin{rem}
\label{rem2.2}
The constants $\overline{\alpha}$ and $\alpha$ defined in (\ref{alpha-1}) and (\ref{alpha-2}) and
the functions $F$, $F^+$, $F^{-}$, and $G$ defined in (\ref{FG-1}), (\ref{GM-1}), (\ref{F-1}), and (\ref{F-2})
are determined completely by the diffusion matrix $\mathbb{D}$. So is the constant $R$.
If $F^+$, $F^-$, and $G$ are Lipschitz continuous with constants $L_{F^+}$, $L_{F^-}$,
and $L_{G}$, respectively, then we can choose $R$ as
\begin{equation}
R = \frac{\overline{\alpha}}{3 \alpha \max (L_{F^+}, L_{F^-}, L_{G})} .
\label{R-1}
\end{equation}
\qed
\end{rem}

\begin{thm}
\label{thm1}
Assume that $\Omega \subset \mathbb{R}^2 $ is a bounded domain and the diffusion matrix $\mathbb{D}$
is continuous on $\overline{\Omega}$ and symmetric and uniformly positive definite on $\Omega$.
Then, there exists a constant $R > 0$ (which depends only on $\mathbb{D}$) such that for any
$(x_0, y_0) \in \Omega$, the intervals
\[
\left ( \ds \sup_{B_R^+(x_0,y_0)} \dfrac{b(x,y)}{a(x,y)}, \quad \inf_{B_R^+(x_0,y_0)} \dfrac{c(x,y)}{b(x,y)}\right ),
\qquad \left (\ds \sup_{B_R^-(x_0,y_0)} \dfrac{c(x,y)}{b(x,y)}, \quad \inf_{B_R^-(x_0,y_0)} \dfrac{b(x,y)}{a(x,y)}\right )
\]
are not empty when $B_R^+(x_0,y_0)$ and $B_R^-(x_0,y_0)$ are not empty, respectively. Moreover, 
for any constants $\beta_1$ and $\beta_2$ with $\tan \beta_1$ and $\tan \beta_2$ in the intervals,
the nonnegative directional splitting (\ref{splitting-1}) holds with $\Omega_0 = B_{R}(x_0, y_0)$
and nonnegative coefficients given in (\ref{gamma0}) -- (\ref{gamma2}).
\end{thm}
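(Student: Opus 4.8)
The plan is to obtain the theorem directly as a corollary of Lemmas~\ref{lem2} and~\ref{lem3}: Lemma~\ref{lem3} supplies the uniform radius $R$ together with the margin inequalities, and Lemma~\ref{lem2} then converts the resulting admissible angles into the splitting. The only genuinely new observation is that uniform positive definiteness (as opposed to mere pointwise positive definiteness) forces a strictly positive gap between the endpoints of each interval, which is what guarantees the intervals are nonempty.

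First I would invoke Lemma~\ref{lem3}, whose hypotheses coincide exactly with those of the theorem, to produce a constant $R > 0$ depending only on $\mathbb{D}$ for which (\ref{lem3-1}) and (\ref{lem3-2}) hold at every interior point $(x_0,y_0)$; the remaining inequalities (\ref{lem3-3}) and (\ref{lem3-4}) of that lemma are not needed here. I would then record the two facts that make the margin term $\overline{\alpha}/(3\alpha)$ strictly positive: $\overline{\alpha} > 0$ because uniform positive definiteness gives $a(x,y)c(x,y) - b(x,y)^2 \ge \overline{\alpha}$ with $\overline{\alpha}$ a positive infimum, and $\alpha < \infty$ because $a$, $b$, $c$ are continuous on the compact set $\overline{\Omega}$ and hence bounded. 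I would also note in passing that uniform positive definiteness yields a uniform positive lower bound on $a$ and $c$, so the quotients $b/a$ and $c/b$ entering the suprema and infima are finite whenever the relevant region is nonempty.

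With $\overline{\alpha}/(3\alpha) > 0$ in hand, the nonemptiness of the intervals is immediate. From (\ref{lem3-1}),
\[
\sup_{B_R^+(x_0,y_0)} \frac{b}{a}
< \sup_{B_R^+(x_0,y_0)} \frac{b}{a} + \frac{\overline{\alpha}}{3\alpha}
\le \inf_{B_R^+(x_0,y_0)} \frac{c}{b},
\]
so the first interval is a nonempty open interval whenever $B_R^+(x_0,y_0)$ is nonempty, and (\ref{lem3-2}) gives the same conclusion for the second interval on $B_R^-(x_0,y_0)$.

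For the final assertion I would take any $\beta_1$ and $\beta_2$ with $\tan\beta_1$ and $\tan\beta_2$ lying in the respective (now nonempty) intervals. By construction these satisfy precisely the strict inequalities (\ref{lem2:angCond1}) of Lemma~\ref{lem2} with $\Omega_0 = B_R(x_0,y_0)$, so that lemma applies verbatim and delivers the nonnegative directional splitting (\ref{splitting-1}) with the coefficients (\ref{gamma0})--(\ref{gamma2}). Since $R$ was furnished by Lemma~\ref{lem3} independently of $(x_0,y_0)$, the same $R$ works uniformly over $\Omega$, which is exactly the statement of the theorem. I do not expect a real obstacle: the substantive work is already packaged in the two preceding lemmas, and the one thing to verify carefully is the strict positivity of $\overline{\alpha}/(3\alpha)$, which is where the \emph{uniform} positive definiteness is essential.
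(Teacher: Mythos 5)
Your proposal is correct and follows exactly the paper's route: the paper's own proof is the one-line observation that the theorem follows by combining Lemma~\ref{lem2} and Lemma~\ref{lem3}, and you have simply spelled out the details (the strict positivity of $\overline{\alpha}/(3\alpha)$ giving nonempty intervals, and the verification that $\tan\beta_1$, $\tan\beta_2$ in those intervals satisfy (\ref{lem2:angCond1}) with $\Omega_0 = B_R(x_0,y_0)$). No gaps.
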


\begin{proof}
The theorem is obtained by combining Lemma~\ref{lem2} and Lemma~\ref{lem3}.
\end{proof}

\vspace{5pt}

\begin{rem}
\label{rem2.3}
The result in the above theorem is different from Weickert's result \cite{Weickert}
in the sense that the coefficients in the directional splitting (\ref{splitting-1}) are nonnegative
in a neighborhood of $(x_0,y_0)$ whereas those of (\ref{eqn:split0}) are nonnegative only at
$(x_0,y_0)$. This is crucial to the construction of monotone FD schemes
since any FD scheme for the divergence form (\ref{had}) uses the values of the coefficients on a neighborhood or a stencil
of a mesh point for the discretization at the point.
\qed
\end{rem}

\begin{rem}
\label{rem2.4}
As mentioned in Remark~\ref{rem2.2} the constant $R$ depends on the diffusion matrix $\mathbb{D}$
and is independent of the location. This implies that a uniform mesh can be used to construct a monotone
FD scheme as long as the mesh spacing is sufficiently small and the maximal size of the FD stencils (used for all mesh points)
is bounded (see \S\ref{sec:FD}).
\qed
\end{rem}

\section{Construction of a monotone FD discretization}
\label{sec:FD}

In this section we develop a monotone FD discretization for (\ref{had}) based on the nonnegative directional splitting
(\ref{splitting-1}). We consider a uniform mesh $(x_j,y_k) = (j h, k h),\;  j, k = 0, ..., N$,
where $N$ is a positive integer and $h = 1/N$. 

Let $(x_j, y_k)$ be an interior mesh point. Denote by $\omega_{j,k}$ a FD stencil
of size $(2 m_{j,k}+1) \times (2 m_{j,k}+1)$ centered at $(x_j, y_k)$.
We assume that 
\begin{equation}
m_{j,k} \le \left [ \frac{3 \alpha}{\overline{\alpha}}\right ] + 1,
\label{m-1}
\end{equation}
where $\overline{\alpha}$ and $\alpha$ are defined in (\ref{alpha-1}) and (\ref{alpha-2})
and $[x]$ denotes the nearest integer smaller than or equal to $x$. We will show that 
the right-hand side of (\ref{m-1}) is an upper bound of the stencil size
to guarantee a monotone FD discretization for (\ref{had}).
We also assume that the mesh spacing $h$ is sufficiently small such that
the stencil $\omega_{j,k} \subset B_R(x_j,y_k)$,
with $R$ being the radius determined in Lemma~\ref{lem3}
and Theorem~\ref{thm1}. Mathematically, this is equivalent to
\begin{equation}
\sqrt{2} h \max\limits_{j, k} m_{j,k} \le R.
\label{h-1}
\end{equation}
Using (\ref{R-1}) and (\ref{m-1}), we obtain a sufficient condition for (\ref{h-1}) as
\begin{equation}
\sqrt{2} h \left ( \left [ \frac{3 \alpha}{\overline{\alpha}}\right ] + 1\right ) \le 
\frac{\overline{\alpha}}{3 \alpha \max (L_{F^+}, L_{F^-}, L_{G})} ,
\label{h-2}
\end{equation}
which can be achieved by choosing $h$ sufficiently small.

We first consider the case where $\omega_{j,k} \subset \overline{\Omega}$.
Under the above assumptions, Theorem~\ref{thm1} implies that the nonnegative directional splitting
(\ref{splitting-1}), with the coefficients being nonnegative on $\omega_{j,k}$, holds
for any constants $\beta_1$ and $\beta_2$ satisfying 
\begin{align} 
A_{j,k} \equiv \ds \sup_{\omega_{j,k}^+} \dfrac{b(x,y)}{a(x,y)}
< \tan\beta_1 < B_{j,k} \equiv \inf_{\omega_{j,k}^+} \dfrac{c(x,y)}{b(x,y)},
\label{AB-1}
\\
C_{j,k}\equiv \ds \sup_{\omega_{j,k}^-} \dfrac{c(x,y)}{b(x,y)}
< \tan\beta_2< D_{j,k} \equiv \inf_{\omega_{j,k}^-} \dfrac{b(x,y)}{a(x,y)} .
\label{CD-1}
\end{align}
It is not difficult to see that
\[
A_{j,k}^{-1} = \inf_{\omega_{j,k}^+} \dfrac{a(x,y)}{b(x,y)},
\quad B_{j,k}^{-1} = \sup_{\omega_{j,k}^+} \dfrac{b(x,y)}{c(x,y)},
\quad
C_{j,k}^{-1} = \inf_{\omega_{j,k}^-} \dfrac{b(x,y)}{c(x,y)},
\quad D_{j,k}^{-1} = \sup_{\omega_{j,k}^-} \dfrac{a(x,y)}{b(x,y)} .
\]
From Lemma~\ref{lem3}, we have
\begin{align}
& B_{j,k} - A_{j,k} \ge \frac{\overline{\alpha}}{3 \alpha}, \quad 
A_{j,k}^{-1} - B_{j,k}^{-1} \ge \frac{\overline{\alpha}}{3 \alpha},
\label{AB-2}
\\
& D_{j,k} - C_{j,k} \ge \frac{\overline{\alpha}}{3 \alpha}, \quad 
C_{j,k}^{-1} - D_{j,k}^{-1} \ge \frac{\overline{\alpha}}{3 \alpha} ,
\label{CD-2}
\end{align}
where $\overline{\alpha}$ and $\alpha$ are defined in (\ref{alpha-1}) and (\ref{alpha-2}). 

The central FD discretization of the first and fourth terms of (\ref{splitting-1}) is straightforward. It can be done
based on points $(x_{j-1}, y_k), \; (x_j, y_k), \; (x_{j+1}, y_k)$ and $(x_{j}, y_{k-1}), \; (x_j, y_k), \; (x_j, y_{k+1})$,
respectively. (These points are on a $3\times 3$ stencil centered at $(x_j,y_k)$.)
On the other hand, the FD discretization of the second and third terms are more complicated.
Take the second term as an example. The main challenge of the discretization is to find
$\beta_1$ with $\tan \beta_1 \in (A_{j,k}, B_{j,k})$ such that the straight line having the slope $\tan \beta_1$
and passing the point $(x_j,y_k)$ intersects at least two more mesh points on $\omega_{j,k}$ other than $(x_j,y_k$).
Once such a $\beta_1$ has been found, the three mesh points are then used to 
discretize the term (which is the second order directional derivative along $\tan \beta_1$) using central finite differences.

\begin{figure}[thb] 
\centering
\includegraphics[scale=0.4]{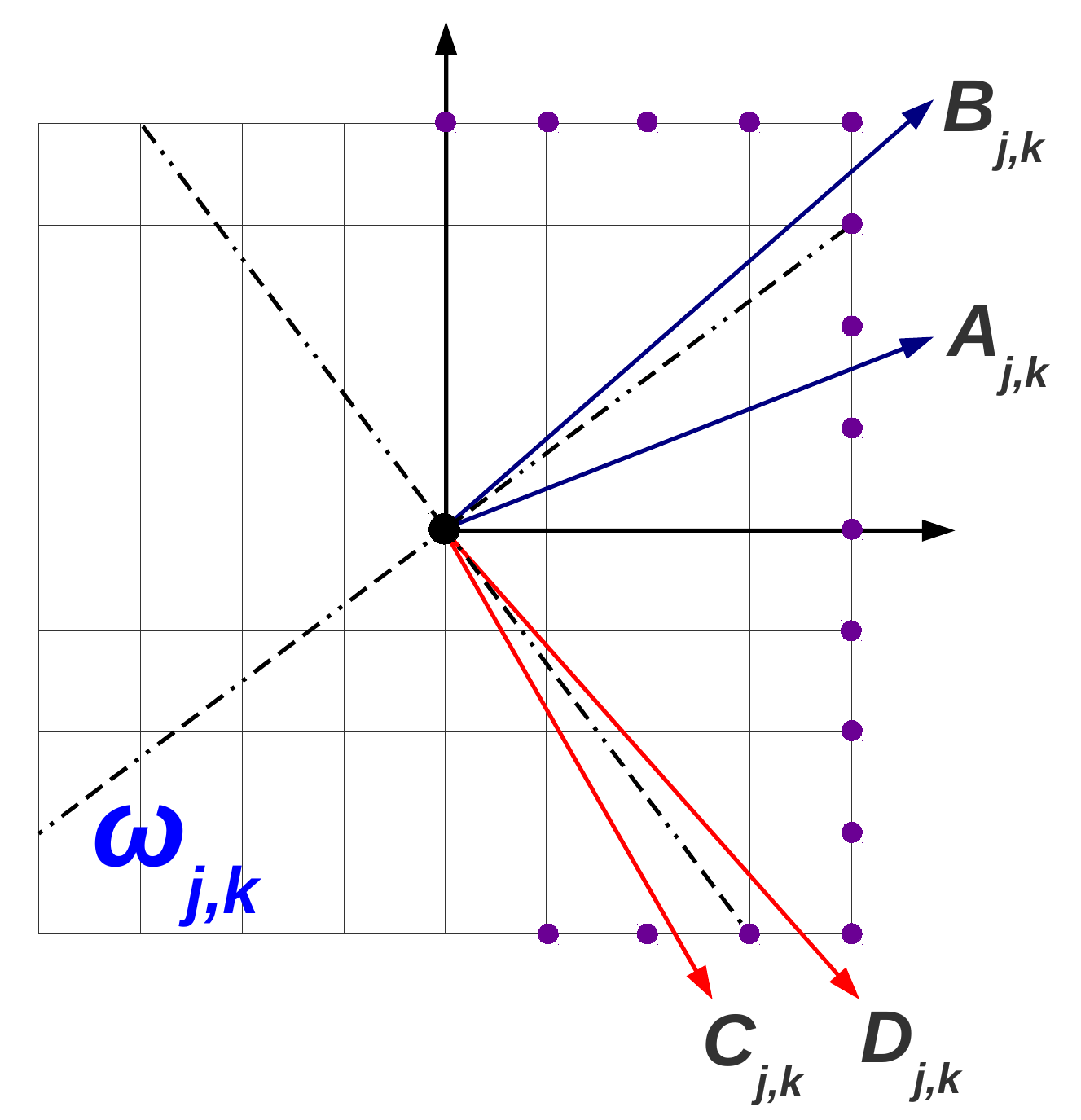}
\caption{Sketch of a FD stencil and principal directions.
\label{fig:stencil}}
\end{figure}
	
To find $\beta_1$, we follow \cite{Weickert} to define $4 m_{j,k}$ principal
directions associated with the outer mesh nodes of $\omega_{j,k}$ (cf. Fig.~\ref{fig:stencil}),
\begin{equation}
{\label{betas}}
	\hat{\beta}_i = \begin{cases}
\arctan\left( \frac{i}{m_{j,k}} \right)\,, & \quad \text{for} \quad -m_{j,k} \leq i \leq m_{j,k} \\
\arctan\left( \frac{m_{j,k}}{2m_{j,k}-i} \right)\,, & \quad \text{for} \quad m_{j,k}+1 \leq i \leq 2m_{j,k} \\
\arctan\left( \frac{m_{j,k}}{-2m_{j,k}-i} \right)\,, & \quad \text{for} \quad -2m_{j,k} + 1 \leq i \leq -m_{j,k}-1 .
			   \end{cases}
\end{equation}
When $m_{j,k}$ is sufficiently large, we can choose $\beta_1$ to be one of $\hat{\beta}_i$, $i=1, ..., 2 m_{j,k}$
that satisfies (\ref{AB-1}).
Indeed, there are three possibilities $A_{j,k}$ and $B_{j,k}$ are located:
$A_{j,k} < 1 < B_{j,k}$, $ A_{j,k} < B_{j,k} \le 1$, or $1\le A_{j,k} < B_{j,k}$.
For the case $A_{j,k} < 1 < B_{j,k}$, we can choose $\beta_1 = \hat{\beta}_{m_{j,k}} = \pi/4$ and $m_{j,k} = 1$.
It is easy to see that (\ref{AB-1}) is satisfied and the three mesh points on the diagonal line running from
the southwest corner to the northeast corner can be used to discretize the directional derivative.

For the case $ A_{j,k} < B_{j,k} \le 1$, we need to consider the principal directions $\hat{\beta}_i$, $i=1, ..., m_{j,k}-1$.
Thus, we require that at least there is an $i$ $(1 \le i < m_{j,k}-1)$ such that
\[
  A_{j,k} < \frac{i}{m_{j,k}} < B_{j,k} .
\]
(In this case, $\beta_1$ is chosen as $\hat{\beta}_i$.)
Equivalently, $m_{j,k}$ should be sufficiently large such that $(m_{j,k} A_{j,k}, m_{j,k} B_{j,k})$ contains at least an integer.
A sufficient condition for this is that the length of the interval is bigger than one, i.e.,
\begin{equation}
m_{j,k} > \frac{1}{B_{j,k} - A_{j,k}} .
\label{eqn:mAB-2}
\end{equation}

For the case $1\le A_{j,k} < B_{j,k}$, we need to consider the principal directions $\hat{\beta}_i$, $i=m_{j,k}+1, ..., 2 m_{j,k}-1$.
We require
\[
A_{j,k} < \frac{m_{j,k}}{2 m_{j,k}-i} < B_{j,k}
\]
or 
\[
m_{j,k} B_{j,k}^{-1} < 2 m_{j,k}-i < m_{j,k} A_{j,k}^{-1}
\]
for some $m_{j,k} < i < 2 m_{j,k}$. This is mathematically equivalent to the requirement that
$(m_{j,k} B_{j,k}^{-1}, m_{j,k} A_{j,k}^{-1})$ contains at least an integer.
A sufficient condition for this is
\[
m_{j,k} > \frac{1}{A_{j,k}^{-1}-B_{j,k}^{-1}} .
\]

The above analysis shows that we should choose $m_{j,k}$ such that
\begin{equation}
\begin{cases}
m_{j,k} = 1, & \quad \text{ for } A_{j,k} < 1 < B_{j,k}
\\
(m_{j,k} A_{j,k}, m_{j,k} B_{j,k}) \text{ contains at least an integer}, & \quad \text{ for } A_{j,k} < B_{j,k} \le 1
\\
(m_{j,k} B_{j,k}^{-1}, m_{j,k} A_{j,k}^{-1}) \text{ contains at least an integer}, & \quad \text{ for } 1 \le A_{j,k} < B_{j,k} .
\end{cases}
\label{m-2}
\end{equation}
A sufficient condition for this is
\begin{equation}
\begin{cases}
m_{j,k} \ge 1, & \quad \text{ for } A_{j,k} < 1 < B_{j,k}
\\
m_{j,k} > \frac{1}{B_{j,k} - A_{j,k}}, & \quad \text{ for } A_{j,k} < B_{j,k} \le 1
\\
m_{j,k} > \frac{1}{A_{j,k}^{-1} - B_{j,k}^{-1}}, & \quad \text{ for } 1 \le A_{j,k} < B_{j,k} .
\end{cases}
\label{m-3}
\end{equation}
From (\ref{AB-2}), one can see that the above condition can be satisfied when $m_{j,k}$ is taken as
$m_{j,k} = \left [ \frac{3 \alpha}{\overline{\alpha}}\right ] + 1$. Since this is only a sufficient condition,
so we have (\ref{m-1}).

Similarly, the condition that we can choose $\beta_2$ as one of the principal directions $\hat{\beta}_i$, $i = -2 m_{j,k}, ..., -1$
satisfying (\ref{CD-1}) is
\begin{equation}
\begin{cases}
m_{j,k} = 1, & \quad \text{ for } C_{j,k} < -1 < D_{j,k}
\\
(m_{j,k} C_{j,k}, m_{j,k} D_{j,k}) \text{ contains at least an integer}, & \quad \text{ for } -1 \le C_{j,k} < D_{j,k}
\\
(m_{j,k} D_{j,k}^{-1}, m_{j,k} C_{j,k}^{-1}) \text{ contains at least an integer}, & \quad \text{ for } C_{j,k} < D_{j,k} \le -1 .
\end{cases}
\label{m-4}
\end{equation}
A sufficient condition for this is
\begin{equation}
\begin{cases}
m_{j,k} \ge 1, & \quad \text{ for } C_{j,k} < -1 < D_{j,k}
\\
m_{j,k} > \frac{1}{D_{j,k} - C_{j,k}}, & \quad \text{ for } -1 \le C_{j,k} < D_{j,k}
\\
m_{j,k} > \frac{1}{C_{j,k}^{-1} - D_{j,k}^{-1}}, & \quad \text{ for }  C_{j,k} < D_{j,k} \le -1 .
\end{cases}
\label{m-5}
\end{equation}
From (\ref{CD-2}), one can see that the above condition can be satisfied when $m_{j,k}$ is taken as
the right-hand side of (\ref{m-1}).

\begin{figure}[thb]
\begin{center}
\hspace{8mm}
\includegraphics[scale=0.4]{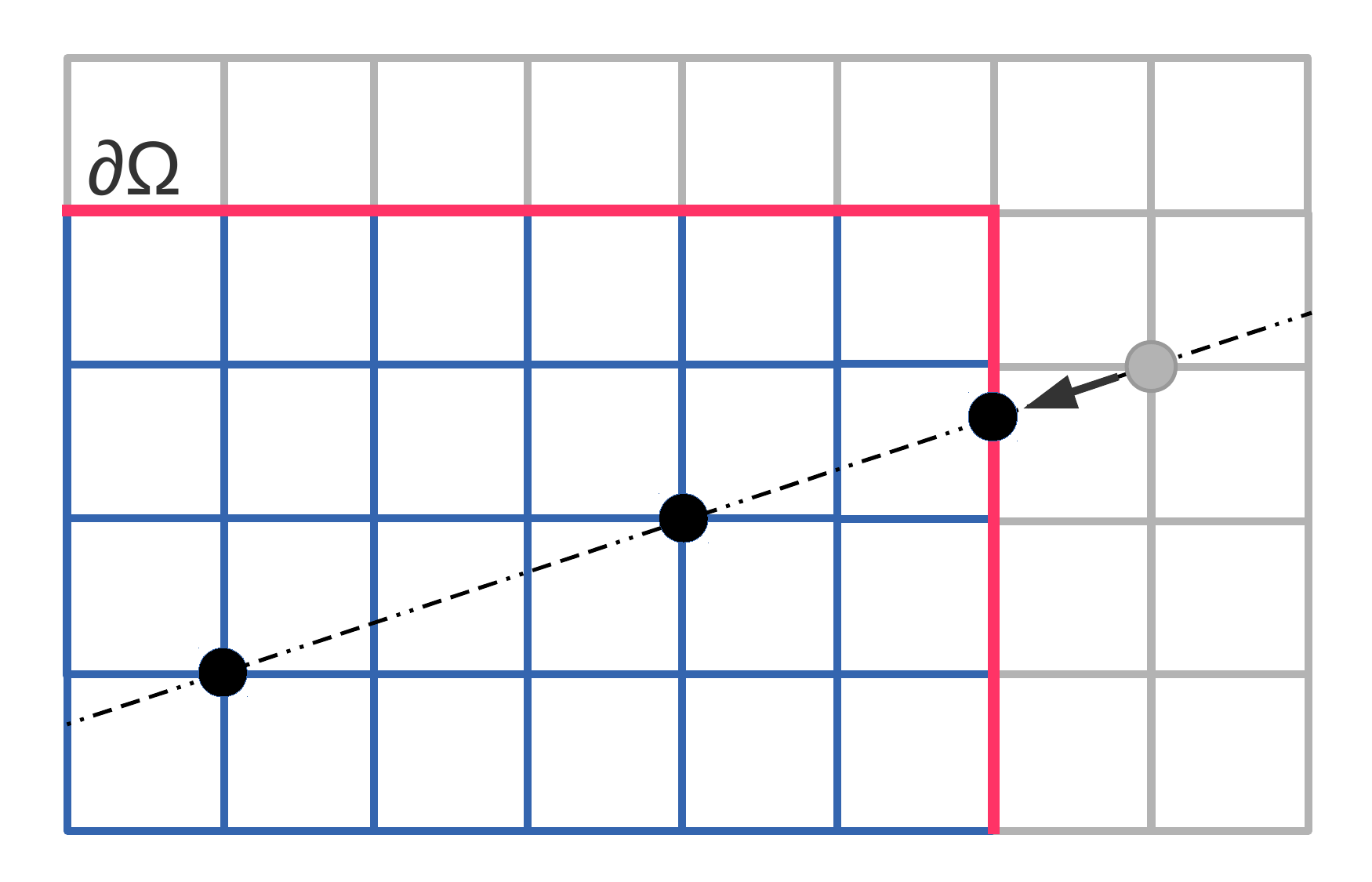}
\caption{FD stencil near the domain boundary}
\label{fig:bdry}
\end{center}
\end{figure}

We now consider the situation when the stencil $\omega_{j,k}$ is not in $\overline{\Omega}$. 
This can happen when $(x_j, y_k)$ is close to the domain boundary and $m_{j,k} > 1$. 
In this case, we can determine $m_{j,k}$, $\beta_1$, and $\beta_2$ as for the case where
$\omega_{j,k}$ is in $\overline{\Omega}$. But, one of the three mesh points found along direction
$\tan \beta_1$ or $\tan \beta_2$ lays outside of the domain; see Fig.~\ref{fig:bdry}.
A treatment for this difficulty is to replace the mesh point by the intersection of the straight line
(with slope $\tan \beta_1$ or $\tan \beta_2$) and the domain boundary and then discretize
the corresponding directional derivative using central finite differences based on this boundary
point and other two mesh points.

It is worth pointing out that each of the terms on the right-hand side of (\ref{splitting-1}) has been discretized
using central three-point finite differences along the corresponding direction. Since the coefficients of the
differential operator are nonnegative, it is not difficult to see that the coefficient matrix of the FD equations is
a $Z$-matrix (with off-diagonal entries being nonpositive) and has nonnegative diagonal entries.
Moreover, it is easy to verify that the matrix is diagonally dominant and irreducible. Thus, the coefficient
matrix is an $M$-matrix and the FD scheme is monotone.

For easy reference, we summarize the above analysis in the following theorem.

\begin{thm}
\label{thm3.1}
Assume that the diffusion matrix $\mathbb{D}$ is continuous on $\overline{\Omega}$
and symmetric and uniformly positive definite on $\Omega$ and a uniform mesh with spacing $h$ is given for $\Omega$.
If $h$ is sufficiently small, a monotone FD scheme exists for problem (\ref{had}) and can be constructed based on
central (directional) finite differences.
\end{thm}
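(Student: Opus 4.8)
The plan is to assemble the ingredients developed throughout \S\ref{sec:FD} into a single global construction and then verify the algebraic structure of the resulting linear system; the theorem is essentially a packaging statement, so the work is to make the location-independence of all the thresholds explicit. First I would fix the radius $R>0$ furnished by Lemma~\ref{lem3} and Theorem~\ref{thm1}, which depends only on $\mathbb{D}$. I would then prescribe, at every interior node $(x_j,y_k)$, a stencil half-width $m_{j,k}$ obeying the selection rules (\ref{m-2}) and (\ref{m-4}); by the uniform gap estimates (\ref{AB-2}) and (\ref{CD-2}) these rules are met by the single choice $m_{j,k}=[3\alpha/\overline{\alpha}]+1$, so the bound (\ref{m-1}) holds uniformly. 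Choosing $h$ small enough that (\ref{h-2}) is satisfied then guarantees $\omega_{j,k}\subset B_R(x_j,y_k)$ for every node, i.e. (\ref{h-1}) holds.

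Next, for each interior node I would invoke Theorem~\ref{thm1} on $\Omega_0=B_R(x_j,y_k)$ to obtain the nonnegative directional splitting (\ref{splitting-1}) with coefficients (\ref{gamma0})--(\ref{gamma2}) that are nonnegative on the whole stencil. The angles $\beta_1,\beta_2$ would be selected among the principal directions $\hat\beta_i$ of (\ref{betas}) so that $\tan\beta_1\in(A_{j,k},B_{j,k})$ and $\tan\beta_2\in(C_{j,k},D_{j,k})$ as required by (\ref{AB-1}) and (\ref{CD-1}); the existence of such admissible principal directions is precisely what the integer-in-interval arguments leading to (\ref{m-3}) and (\ref{m-5}) establish, again underwritten by (\ref{AB-2}) and (\ref{CD-2}).

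Having fixed the splitting at every node, I would discretize each of the four terms $\p_x(\gamma_0\,\p_x u)$, $\p_{\beta_1}(\gamma_1^+\,\p_{\beta_1}u)$, $\p_{\beta_2}(\gamma_1^-\,\p_{\beta_2}u)$, and $\p_y(\gamma_2\,\p_y u)$ by a central three-point difference along its own direction, using the collinear mesh points guaranteed by the choice of $\hat\beta_i$. Because each coefficient $\gamma$ is nonnegative, each such three-point operator contributes nonnegative weights to the two neighbors and a nonpositive weight to the center, the three weights summing to zero. For stencils that protrude beyond $\overline{\Omega}$ I would replace the exterior collinear node by the intersection of the corresponding line with $\p\Omega$ and impose the Dirichlet datum $g$ there, which only moves a nonnegative contribution to the right-hand side and preserves the sign pattern. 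Assembling all contributions, the coefficient matrix is a $Z$-matrix with nonnegative diagonal; its interior rows have zero row sum while rows touching the boundary are strictly dominant, so the matrix is weakly diagonally dominant, and the mesh connectivity makes it irreducible. Hence it is an irreducibly diagonally dominant $Z$-matrix with nonnegative diagonal, therefore an $M$-matrix, and the scheme is monotone.

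I expect the main obstacle to be twofold. The substantive point is guaranteeing that admissible principal directions $\hat\beta_i$ exist with a stencil size that is \emph{uniformly} bounded across the domain; this is exactly where the location-independent gaps (\ref{AB-2}) and (\ref{CD-2}) from Lemma~\ref{lem3} are indispensable, converting pointwise solvability into a single global $h$-threshold via (\ref{h-2}). The more delicate bookkeeping is verifying the $M$-matrix property after the boundary modification: one must check that replacing exterior nodes by boundary intersections does not destroy the nonpositivity of the off-diagonal entries or the irreducible diagonal dominance, so that the monotonicity conclusion survives in the rows adjacent to $\p\Omega$.
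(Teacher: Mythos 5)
Your proposal is correct and follows essentially the same route as the paper: the theorem is proved there by exactly the assembly you describe (uniform $R$ from Lemma~\ref{lem3}, stencil size via (\ref{m-2})--(\ref{m-5}) bounded by (\ref{m-1}), mesh-size condition (\ref{h-1})--(\ref{h-2}), principal directions from (\ref{betas}), central directional differences, boundary-intersection treatment, and the irreducibly diagonally dominant $Z$-matrix argument). Your closing remarks on the uniform gaps (\ref{AB-2}), (\ref{CD-2}) and on checking the sign pattern after the boundary modification correctly identify the two points the paper itself leans on, the latter of which the paper passes over with ``it is easy to verify.''
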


\begin{rem}
\label{rem3.1}
Generally speaking, the upper bound for $m_{j,k}$ defined in (\ref{m-1}) is very conservative.
In practice, we can use stencils of variable size for different mesh points. The stencil size
$m_{j,k}$ can be found using (\ref{m-2}) and (\ref{m-4}). The constant $R$ can be calculated
using (\ref{R-1}). The mesh spacing can be verified using
\begin{equation}
\sqrt{2} h \max\limits_{j,k} m_{j,k} \le R .
\label{h-3}
\end{equation}
\qed
\end{rem}

To conclude this section, we would like to see what condition is
if only the three-by-three ($m_{j,k}=1$) stencil is used for all interior mesh points.
In this case, we need to use the mesh points on the diagonal lines of $\omega_{j,k}$
to discretize the second and third terms on the right-hand side of (\ref{splitting-1}).
This means that $A_{j,k} < 1 < B_{j,k}$ ($\tan \beta_1 = 1$) and $C_{j,k} < -1 < D_{j,k}$
($\tan \beta_2 = -1$). From (\ref{AB-1}) and (\ref{CD-1}), this implies
\begin{equation}
a(x,y) > |b(x,y)|, \quad c(x,y) > |b(x,y)|, \quad \forall (x,y) \in \Omega
\label{cond-1}
\end{equation}
or, in words, $\mathbb{D}$ is strictly diagonally dominant for any point in $\Omega$.
Thus, we have the following theorem.

\begin{thm}
\label{thm3.2}
Assume that the diffusion matrix $\mathbb{D}$ is continuous on $\overline{\Omega}$
and symmetric and uniformly positive definite on $\Omega$ and a uniform mesh with spacing $h$ is given thereon.
If (\ref{cond-1}) is satisfied and $h$ is sufficiently small, then a monotone FD scheme can be constructed
for (\ref{had}) based on a three-by-three stencil and central finite differences.
\end{thm}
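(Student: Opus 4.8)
The plan is to treat Theorem~\ref{thm3.2} as the specialization of the construction in \S\ref{sec:FD} to the fixed stencil size $m_{j,k}=1$ and the two fixed diagonal directions, and to show that condition~(\ref{cond-1}) is exactly what makes those directions admissible at every interior node. Concretely, at each interior mesh point $(x_j,y_k)$ I would take $\beta_1=\pi/4$ (the southwest--northeast diagonal of the $3\times 3$ stencil) and $\beta_2=-\pi/4$ (the northwest--southeast diagonal), both of which are principal directions $\hat\beta_{\pm m_{j,k}}$ for $m_{j,k}=1$ and hence always realizable by three collinear mesh points of the stencil. It then remains to verify that the splitting (\ref{splitting-1}) with these directions has nonnegative coefficients.

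The key computation is to substitute $\tan\beta_1=\cot\beta_1=1$, $\tan\beta_2=\cot\beta_2=-1$, and $\cos\beta_1\sin\beta_1=\tfrac12=-\cos\beta_2\sin\beta_2$ into (\ref{gamma0})--(\ref{gamma2}). On $\{b\ge 0\}$ this collapses to $\gamma_0=a-b$, $\gamma_2=c-b$, $\gamma_1^+=2b$, $\gamma_1^-=0$, while on $\{b<0\}$ it gives $\gamma_0=a+b$, $\gamma_2=c+b$, $\gamma_1^-=-2b$, $\gamma_1^+=0$. Thus nonnegativity of all four coefficients reduces precisely to $a\ge |b|$ and $c\ge |b|$, which is (\ref{cond-1}). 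Since (\ref{cond-1}) holds with strict inequalities on $\Omega$ and $\mathbb{D}$ is continuous on $\overline\Omega$, these inequalities persist (non-strictly) on $\overline\Omega$, so the coefficients are nonnegative everywhere the stencil reaches. With the coefficients in hand, I would discretize the four terms of (\ref{splitting-1}) by central three-point differences along the $x$-axis, the $y$-axis, and the two diagonals, all lying inside $\omega_{j,k}$, and then repeat the $Z$-matrix / diagonal-dominance / irreducibility argument from the proof of Theorem~\ref{thm3.1} to conclude that the system matrix is an $M$-matrix and the scheme is monotone.

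I expect the delicate point to be uniformity near $\partial\Omega$. The clean statement of Theorem~\ref{thm1} requires $\tan\beta_1$ to lie \emph{strictly} inside $(A_{j,k},B_{j,k})$ and $\tan\beta_2$ strictly inside $(C_{j,k},D_{j,k})$, i.e.\ $A_{j,k}<1<B_{j,k}$ and $C_{j,k}<-1<D_{j,k}$; but the margin here comes from (\ref{cond-1}), which may degenerate in the limit as the stencil approaches a boundary point where, say, $a=|b|$, so that $A_{j,k}$ can equal $1$ and Theorem~\ref{thm1} no longer applies verbatim. The way around this---and the reason I prefer the direct route above over quoting Theorem~\ref{thm1}---is that the explicit coefficients $a-b$, $c-b$, $2b$ (and their $b<0$ analogues) are already nonnegative under the \emph{non-strict} inequalities $a\ge|b|$, $c\ge|b|$, which do survive passage to $\overline\Omega$; strict separation of the intervals from $\pm1$ is never actually needed for monotonicity. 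The role of ``$h$ sufficiently small'' is then only to make $\omega_{j,k}\subset B_R(x_j,y_k)$ so that the neighborhood splitting of Lemma~\ref{lem3} and Theorem~\ref{thm1} is in force at stencils where (\ref{cond-1}) retains a margin, and to ensure that for nodes adjacent to $\partial\Omega$ the diagonal line through $(x_j,y_k)$ meets $\partial\Omega$ within the stencil so that the boundary-intersection treatment of \S\ref{sec:FD} is well defined. Finally, I would check that irreducibility and diagonal dominance are not lost when a diagonal coefficient vanishes (e.g.\ $\gamma_1^+=0$ where $b=0$): this holds because the $x$- and $y$-direction couplings always carry the coefficients $\gamma_0,\gamma_2$, which are strictly positive on $\Omega$ by (\ref{cond-1}) and already connect every interior node to its neighbors.
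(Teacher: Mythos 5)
Your proposal is correct and follows essentially the same route as the paper: specialize the general construction of \S\ref{sec:FD} to $m_{j,k}=1$ with the diagonal directions $\beta_1=\pi/4$, $\beta_2=-\pi/4$, observe that admissibility of these directions in (\ref{AB-1})--(\ref{CD-1}) is exactly the strict diagonal dominance condition (\ref{cond-1}), and then reuse the $Z$-matrix / diagonal-dominance / irreducibility argument of Theorem~\ref{thm3.1}. Your explicit evaluation of the coefficients ($\gamma_0=a\mp b$, $\gamma_2=c\mp b$, $\gamma_1^{\pm}=\pm 2b$) and your observation that nonnegativity survives under the non-strict inequalities on $\overline{\Omega}$ are harmless refinements of the paper's argument rather than a different approach (and note that for $m_{j,k}=1$ the stencil of an interior node never leaves $\overline{\Omega}$, so the boundary-intersection treatment you mention is not actually needed).
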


It is pointed out that the condition (\ref{cond-1}) has been obtained by Weickert \cite{Weickert} and
Mr\'{a}zek and Navara \cite{Mrazek}.


\section{Numerical results}
\label{sec:num}

In this section we present some numerical results for four examples to verify the theoretical findings in the previous
sections. We first note that it is useful to have an estimate on the values of parameters $R$ (the uniform radius
defined in Lemma~\ref{lem3}) and $m_{j,k}$ (the size of FD stencil) in the actual computation.
The former can be estimated using (\ref{R-1}) where $\bar{\alpha}$, $\alpha$, $L_{F^+}$, $L_{F^-}$, and $L_{G}$
can be approximated numerically. For the latter, (\ref{m-1}) is generally too conservative to use in practical
computation. Instead, we use (\ref{m-2}) and (\ref{m-4}) to estimate $m_{j,k}$, with $A_{j,k}$, $B_{j,k}$,
$C_{j,k}$, $D_{j,k}$ being computed on $B_{R}(x_j,y_k)$. Notice that we use  $B_{R}(x_j,y_k)$ instead of $\omega_{j,k}$
as defined in (\ref{AB-1}) and (\ref{CD-1}). This is because the former is larger than the latter and the former
is independent of $m_{j,k}$ which is to be sought.
Once $m_{j,k}$ has been determined, we can find two principal directions (cf. (\ref{betas})) to satisfy (\ref{AB-1}) and (\ref{CD-1})
and finally construct the FD discretization of (\ref{had}).


\begin{exam}
\label{exam1}
This example is in the form of (\ref{had}) with 
\begin{equation}
\mathbb{D} = \begin{bmatrix}  
			9 & 4 \sin(2 \pi x y) \\ 
			4 \sin(2 \pi x y) & 3 
\end{bmatrix} .
\label{exam1-D}
\end{equation}
We seek to verify that the developed monotone scheme satisfy DMP and choose
\[ 
  f(x,y) = 0 \,, \quad g(x,y) = \cos(\pi x y) + y .
\]
This example does not satisfy (\ref{cond-1}). So we do not expect that a $3 \times 3 $ stencil will work
for this example. A direct numerical calculation shows that
\[
\overline{\alpha}  = 11,\qquad \alpha = 45.
\]
Thus the upper bound in (\ref{m-1}) is $[2 \alpha/\overline{\alpha}] + 1 = 13$, which requires
a stencil of size $27 \times 27$ for the monotone FD discretization. On the other hand,
a numerical calculation based on (\ref{m-2}) and (\ref{m-4}) shows that a stencil of size $5 \times 5$
is sufficient to guarantee a monotone FD discretization. 

The numerical results obtained with a stencil of size $5 \times 5$
for this example are shown in Table~\ref{exam1-table}. They confirm
that the computed solution satisfies the maximum principle and stays between the minimum and maximum values
of the solution on the domain boundary.

\begin{table}[thb]
\begin{center}

\begin{tabular}{|c|c|c|c|c|}
\hline
    $J=K$	   & Boundary Min  & Interior Min  & Boundary Max & Interior Max \\
\hline 21     &   -5.105652e-2 &  1.040961e-2  &   2.000000 &  1.912261 \\ 
\hline 51     &   -5.105652e-2 & -2.444841e-2  &   2.000000 &  1.972163 \\ 
\hline 101   &   -5.105652e-2 & -3.753179e-2  &   2.000000 &  1.987642 \\ 
\hline 501   &   -5.109830e-2 & -4.837255e-2  &   2.000000 &  1.997862 \\ 
\hline 1001 &   -5.110190e-2 & -4.973654e-2  &   2.000000 &  1.998961 \\ 
\hline
\end{tabular}
\captionof{table}{Example \ref{exam1}. Extrema of the numerical solutions in the interior and on the boundary of the physical domain.}
\label{exam1-table}
\end{center}
\end{table}
\end{exam}

\begin{exam}
\label{exam2}
In this example, we seek to verify the accuracy of the monotone scheme.
We use the same differential operator as in Example \ref{exam1}, i.e., in the form of (\ref{had}) with 
(\ref{exam1-D}), but choose $f$ and $g$ such that the exact solution of the BVP is given by
$u(x,y) = \sin(2 \pi x) \sin(3 \pi y)$.

The numerical results obtained for this example are shown in  
Figs.~\ref{exam2-f1} and \ref{exam2-f2}. The contours of the exact and computed solutions
are shown in Fig.~\ref{exam2-f1}. The sign pattern in Fig.~\ref{exam2-f2a} shows that the coefficient
matrix has nonpositive off-diagonal entries and nonnegative diagonal entries, as stated in Theorem~\ref{thm3.1}.
Since the scheme is monotone, from the result of Barles and Souganidis \cite{Barles}
we may expect the computed solution to converge
in second order, which indeed can be seen from the convergence history shown in Fig.~\ref{exam2-f2b}.

\begin{figure}
        \centering
        \begin{subfigure}[b]{0.5\linewidth}
                \includegraphics[width=\textwidth]{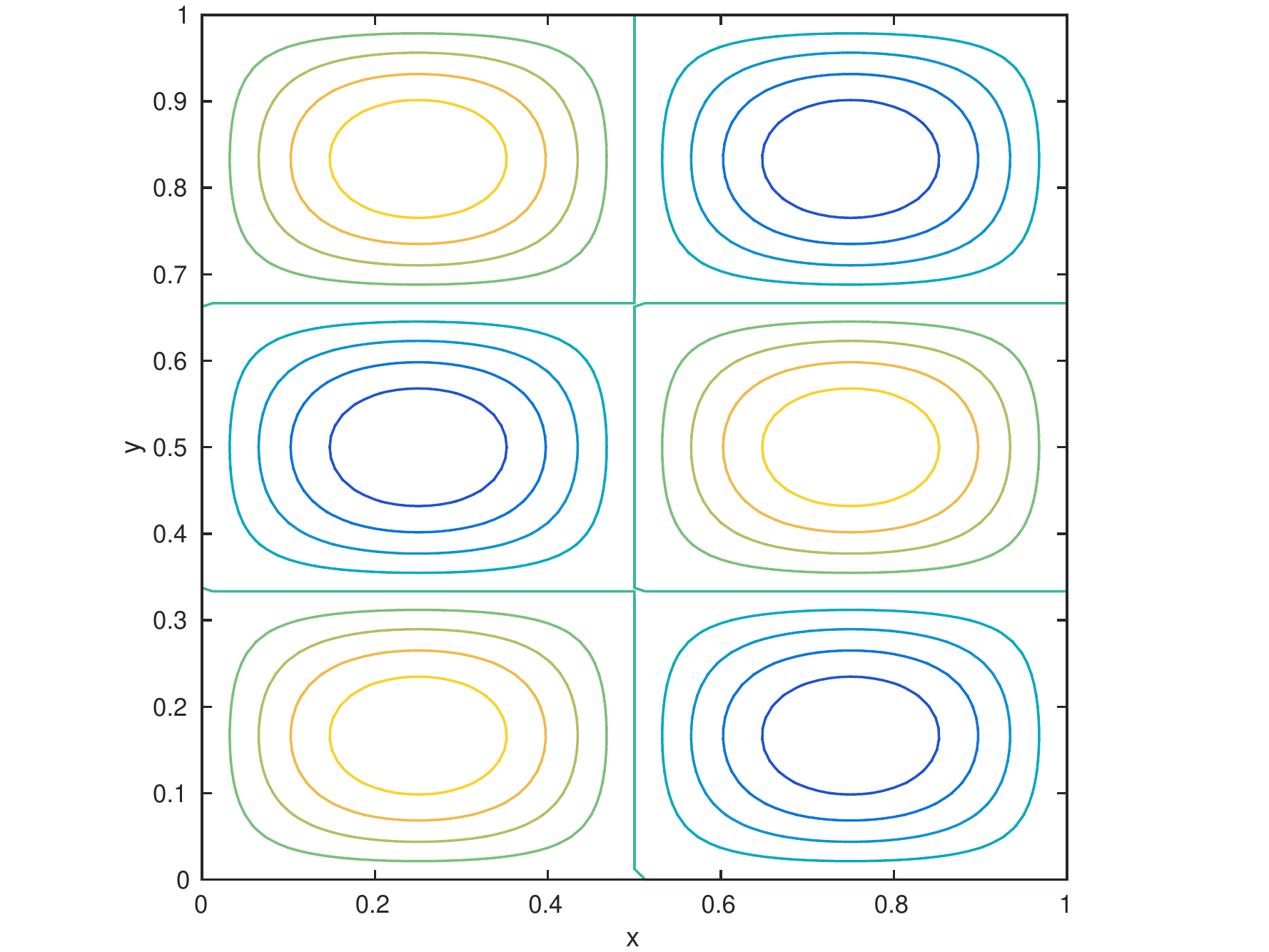}
                \caption{Exact solution}
                \label{exam2-f1a}
        \end{subfigure}%
        \begin{subfigure}[b]{0.5\linewidth}
                \includegraphics[width=\textwidth]{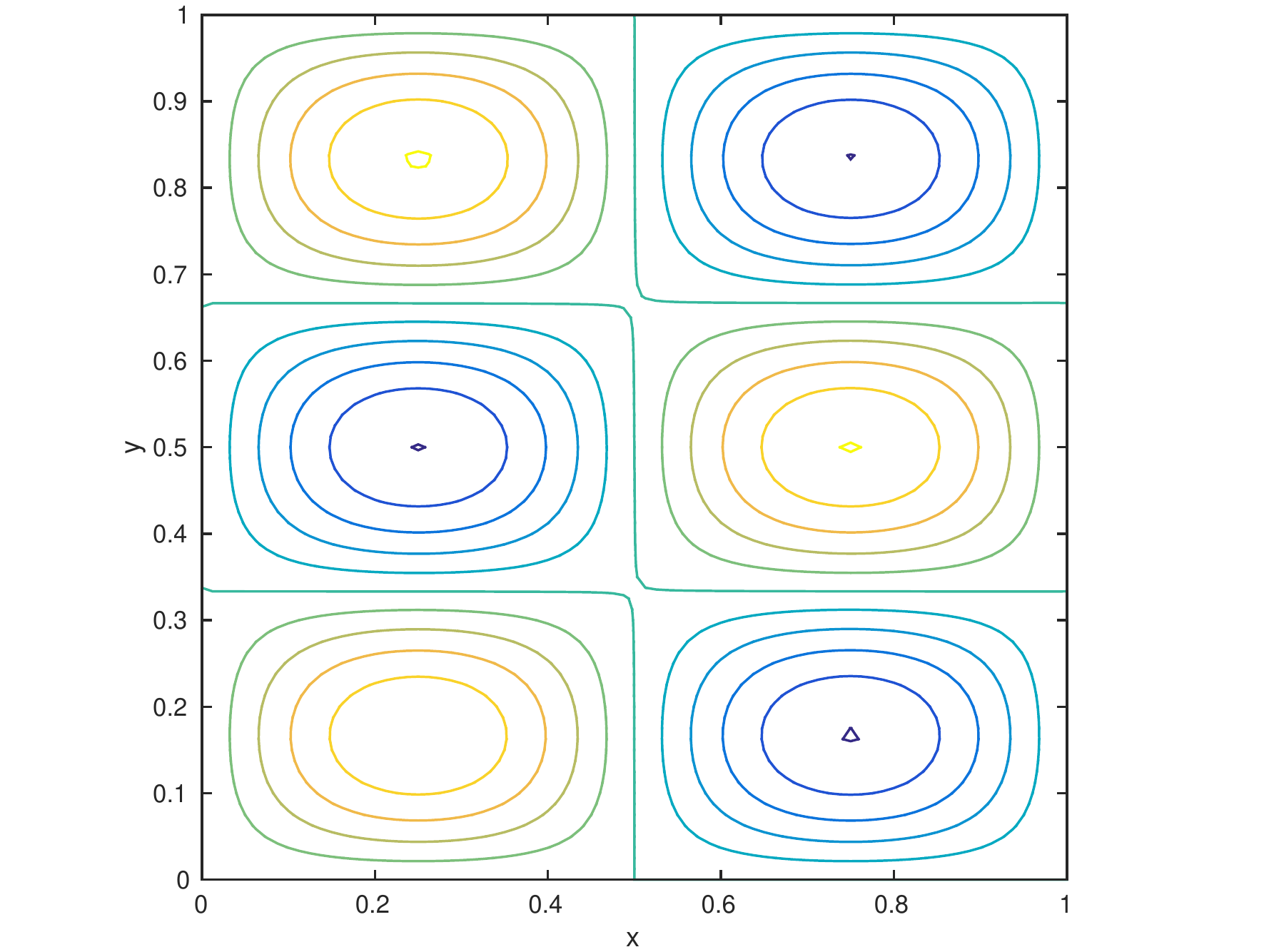}
                \caption{Numerical solution with $J = 81$}
                \label{exam2-f1b}
        \end{subfigure}
	\caption{Example \ref{exam2}. Contours of the exact and computed solutions.}
	\label{exam2-f1}
\end{figure}

\begin{figure}
        \centering
        \begin{subfigure}[b]{0.5\linewidth}
                \includegraphics[width=\textwidth]{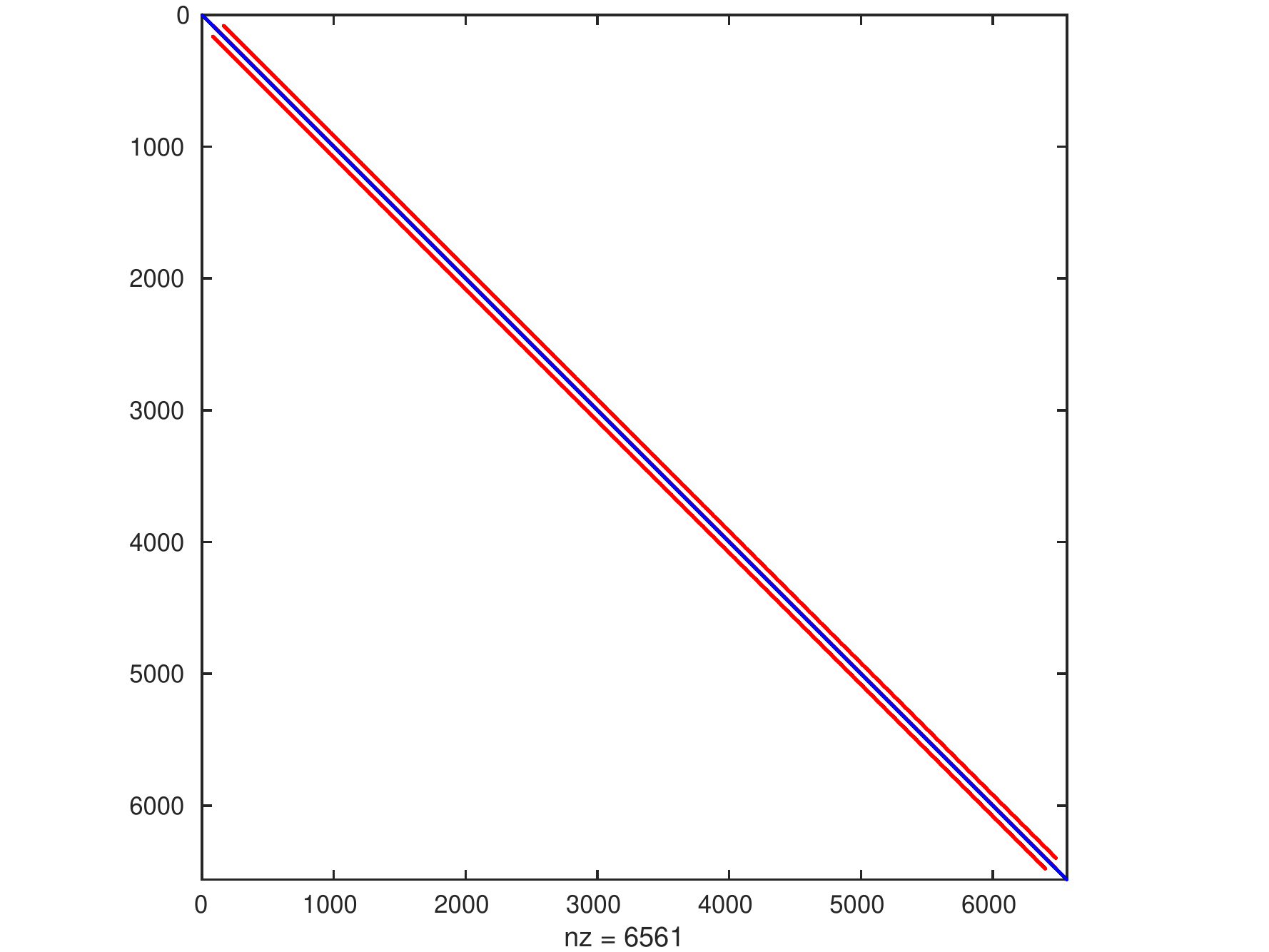}
                \caption{Sign pattern for J = 81}
                \label{exam2-f2a}
        \end{subfigure}%
        \begin{subfigure}[b]{0.5\linewidth}
                \includegraphics[width=\textwidth]{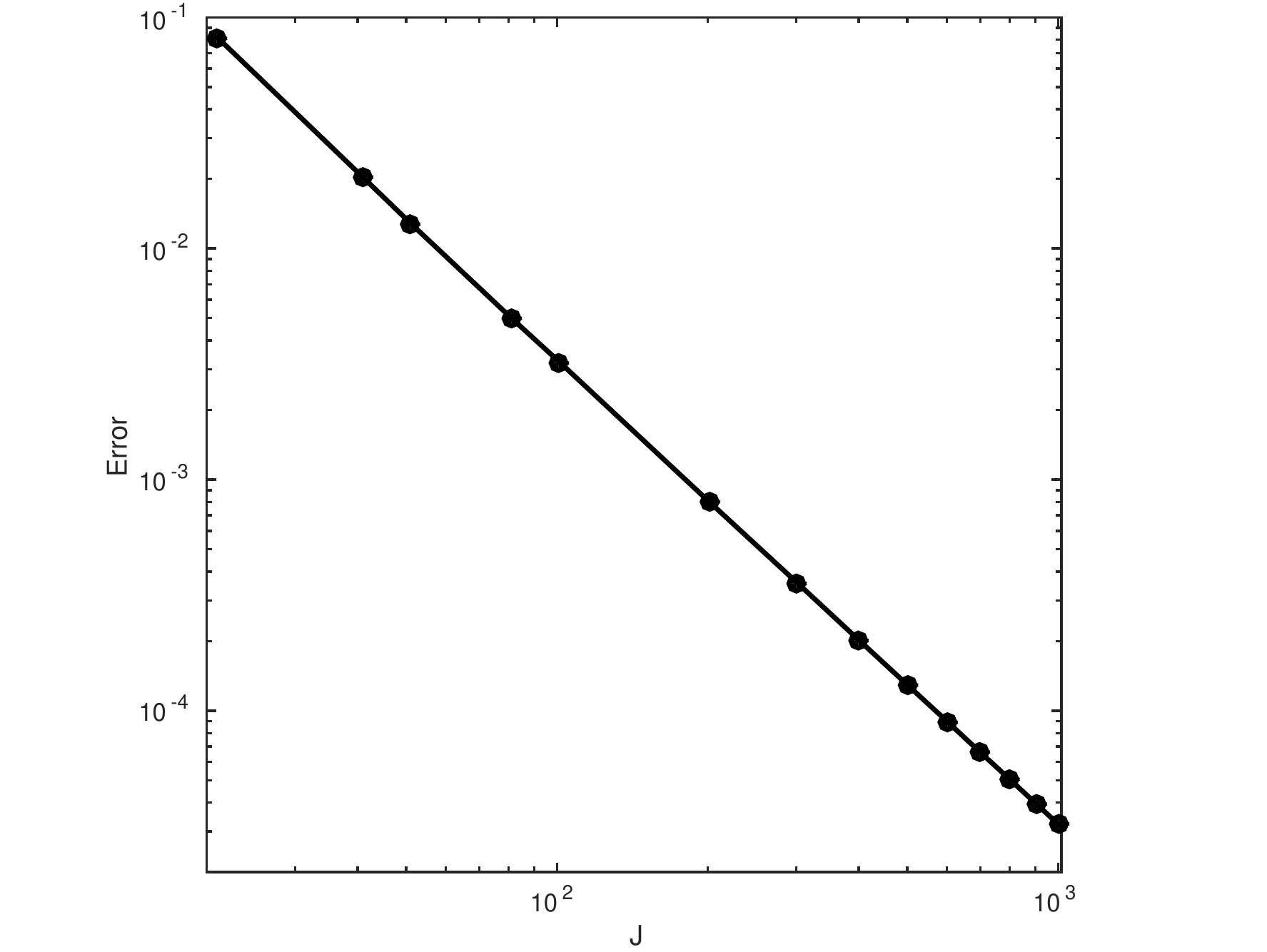}
                \caption{Convergence history}
                \label{exam2-f2b}
        \end{subfigure}
	\caption{Example \ref{exam2}. Sign pattern of the coefficient matrix and the convergence history
		 	of the maximum norm of the error  of the FD solution.}
	\label{exam2-f2}
\end{figure}

\end{exam}

\begin{exam}
\label{exam3}
For this example, the diffusion matrix is given by
\begin{equation}
	\mathbb{D} = \begin{bmatrix}
			1.1 & \sin(2 \pi xy) \\ 
			\sin(2 \pi xy) & 1.1
		\end{bmatrix} .
\end{equation}
The functions $f$ and $g$ are chosen such that the exact solution of the BVP reads as 
$ u(x,y) = \sin(2 \pi x) \sin(3 \pi y) $. This example satisfies (\ref{cond-1}) and by Theorem \ref{thm3.2},
we can use a $3 \times 3$ stencil for constructing a monotone scheme. 

The numerical results for this example are shown in Figs.~\ref{exam3-f1} and \ref{exam3-f2}.
One can see that the coefficient matrix has nonpositive off-diagonal entries and nonnegative diagonal entries
and the computed solution converges to the exact solution at a rate of second order.

\begin{figure}
        \centering
        \begin{subfigure}[b]{0.5\linewidth}
                \includegraphics[width=\textwidth]{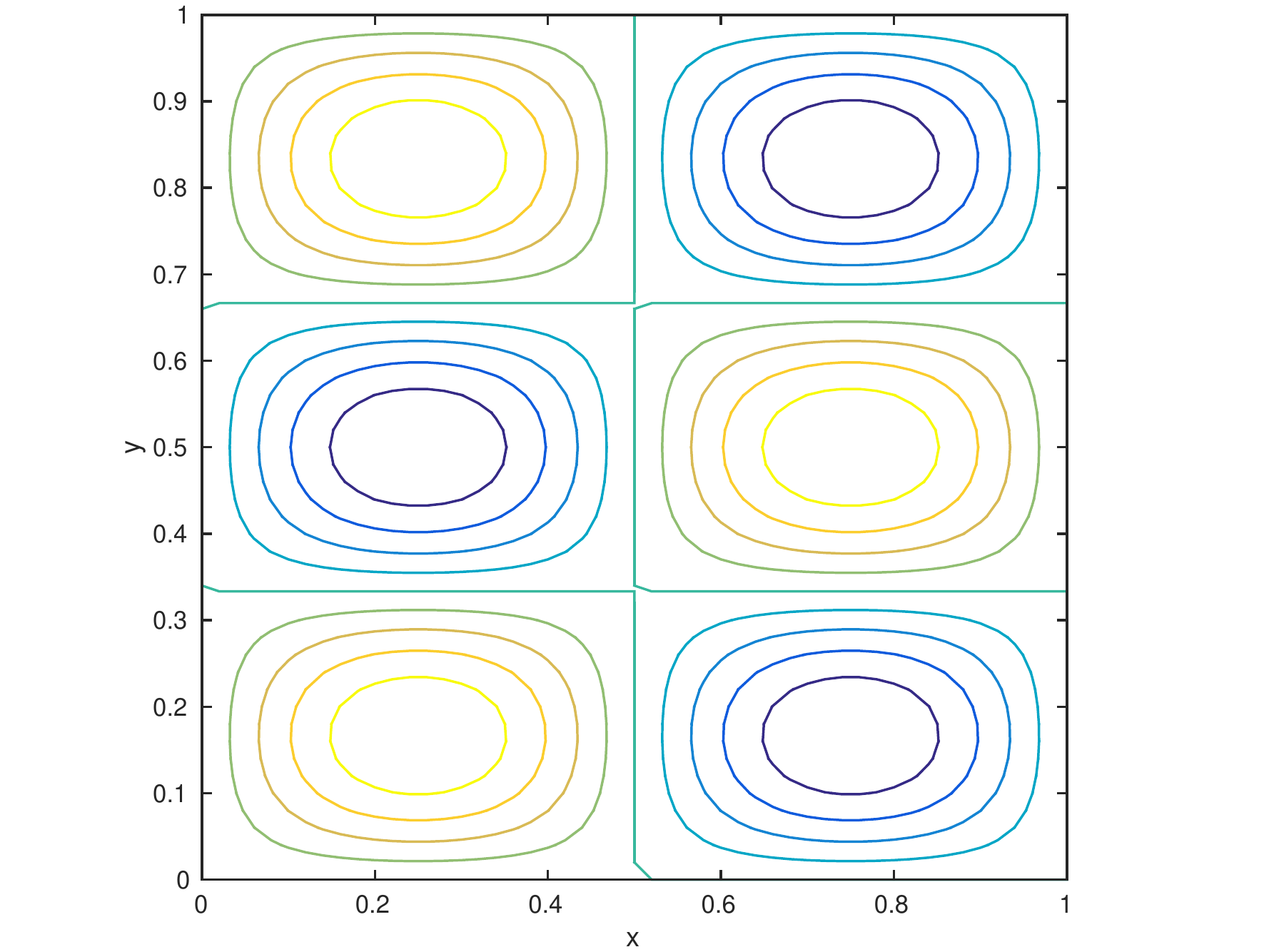}
                \caption{Exact solution}
                \label{exam3-f1a}
        \end{subfigure}%
        \begin{subfigure}[b]{0.5\linewidth}
                \includegraphics[width=\textwidth]{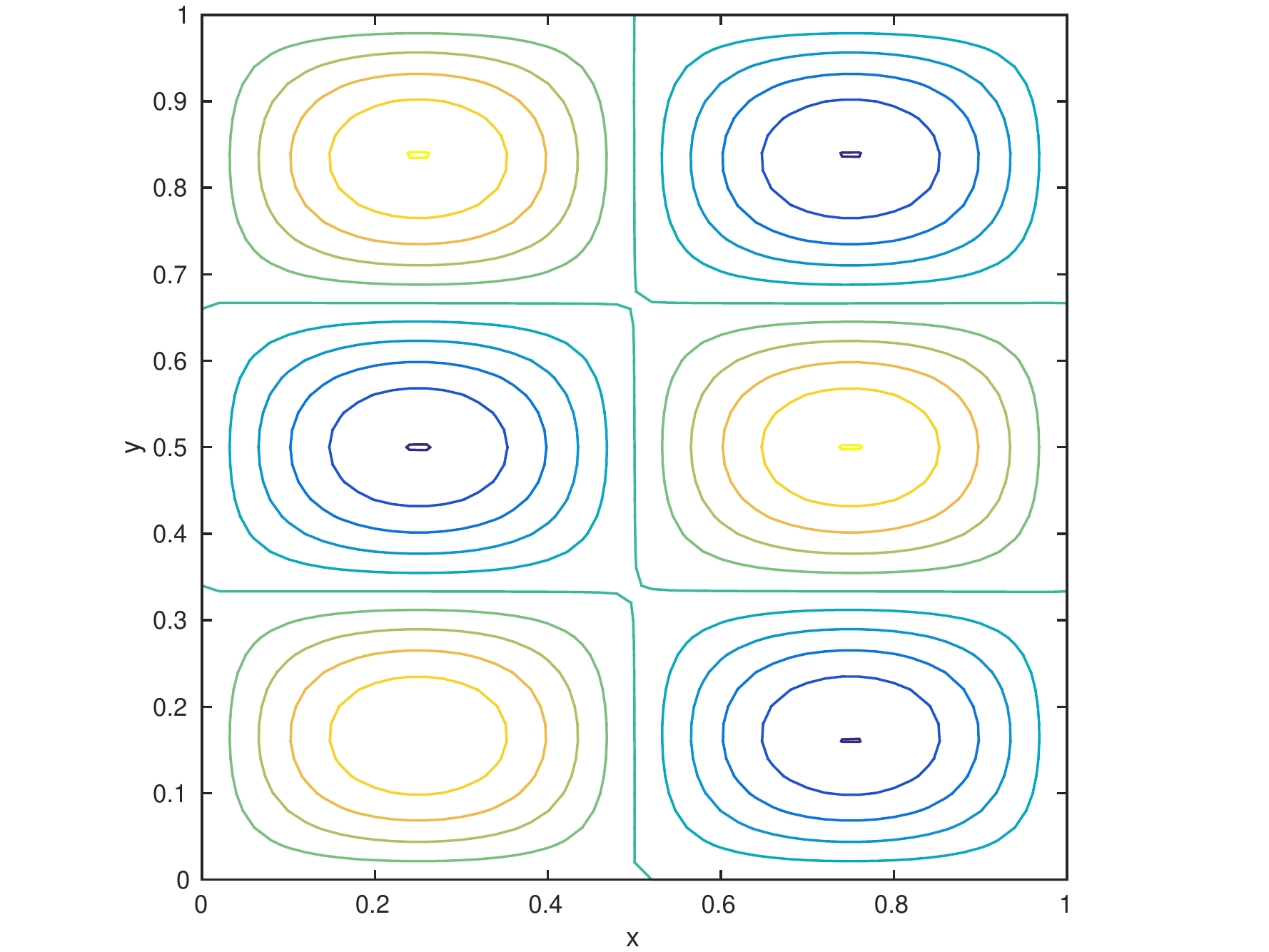}
                \caption{Numerical solution with $J=51$}
                \label{exam3-f1b}
        \end{subfigure}
	\caption{Example \ref{exam3}. Contours of the exact and computed solutions.}
	\label{exam3-f1}
\end{figure}

\begin{figure}
        \centering
        \begin{subfigure}[b]{0.5\linewidth}
                \includegraphics[width=\textwidth]{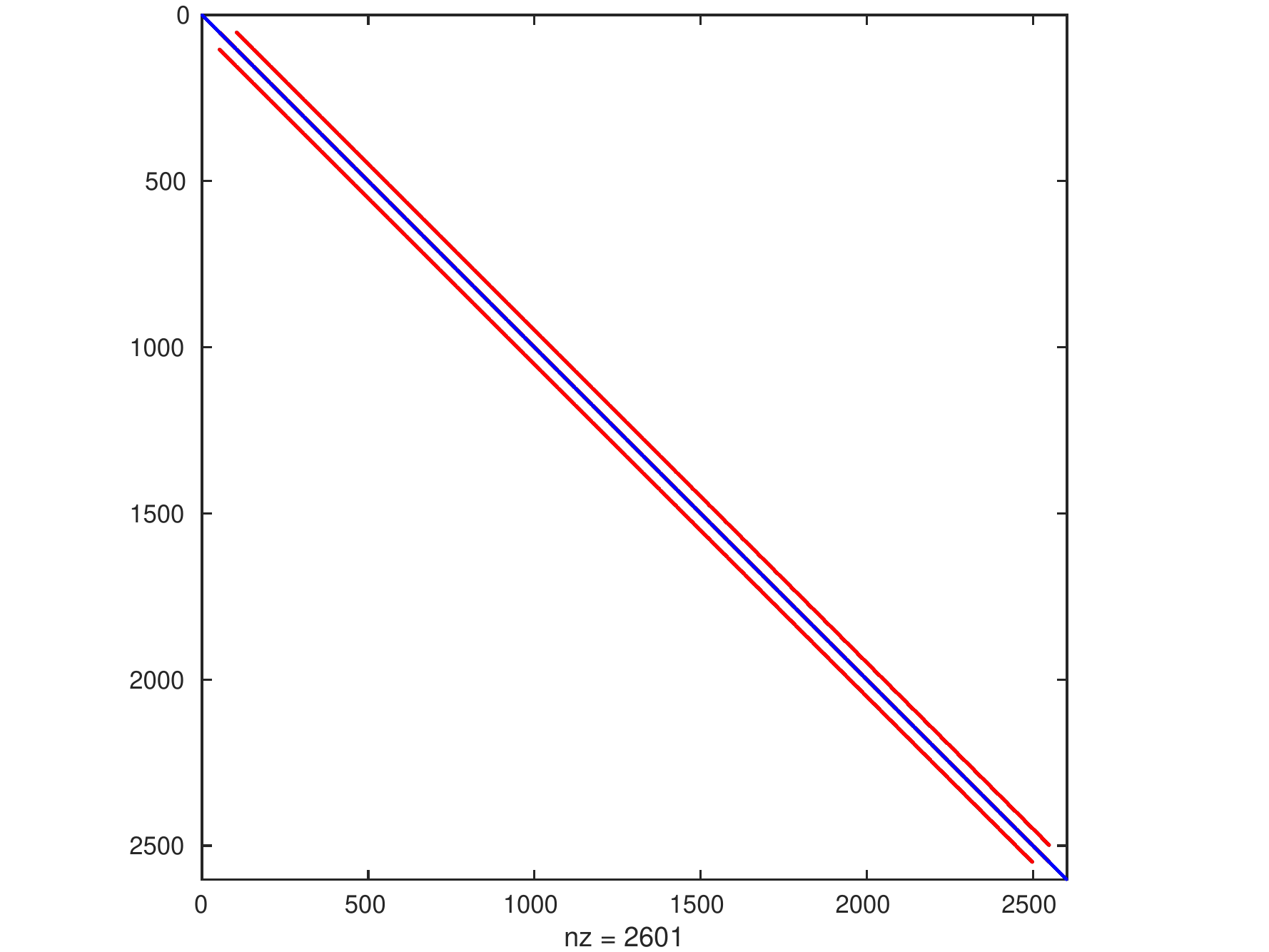}
                \caption{Sign pattern for J = 51}
                \label{exam3-f2a}
        \end{subfigure}%
        \begin{subfigure}[b]{0.5\linewidth}
                \includegraphics[width=\textwidth]{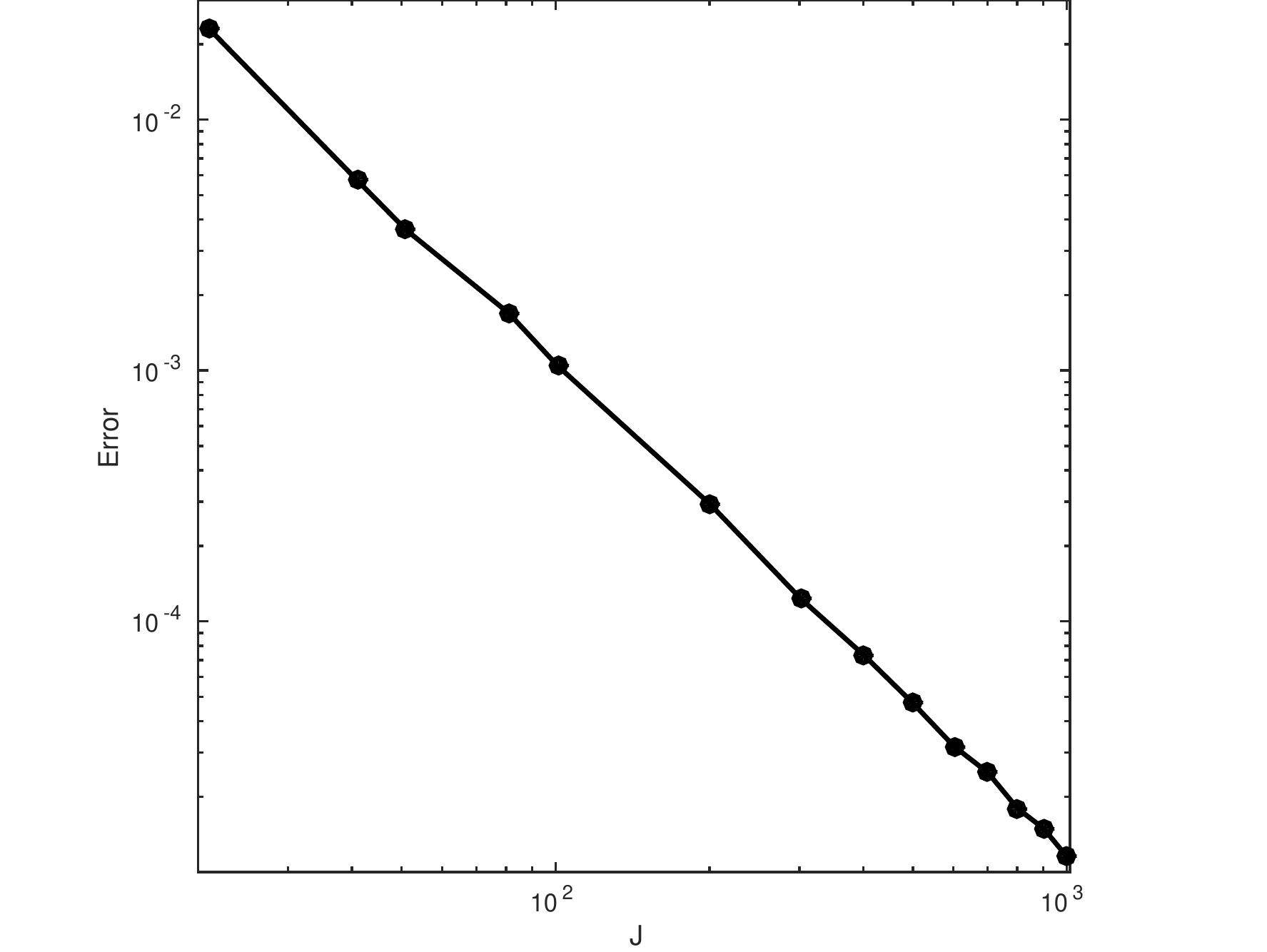}
                \caption{Convergence history}
                \label{exam3-f2b}
        \end{subfigure}
	\caption{Example \ref{exam3}. Sign pattern of the coefficient matrix and the convergence history for
		the maximum norm of the error of the FD solution.}
	\label{exam3-f2}
\end{figure}
\end{exam}

\begin{exam}
\label{exam4}
In this final example, the diffusion matrix is given by
\begin{equation}
\label{exam4-D}
\mathbb{D} = \begin{bmatrix}
			\cos \theta & -\sin \theta \\ 
			\sin \theta & \cos \theta
		\end{bmatrix} \begin{bmatrix}
			k & 0 \\ 
			0 & 1 
		\end{bmatrix} \begin{bmatrix}
			\cos \theta & \sin \theta \\ 
			-\sin \theta & \cos \theta
		\end{bmatrix} ,
\end{equation}
where $\theta = \pi \sin(x) \cos(y)$ and $k$ is a parameter. The bigger $k$ is, the more anisotropic the diffusion matrix is.
The more anisotropic $D$ is, the larger stencil is required for the construction of a monotone FD scheme. We consider
two cases with $k = 10$ and $k = 100$. For both of these cases, we again choose $f$ and $g$ in \eqref{had} such that the exact solution is $u(x,y) = \sin(2 \pi x) \sin(3 \pi y)$. 

The numerical calculation suggests that a $7 \times 7$ stencil be needed
to construct a monotone scheme for $k=10$ and a stencil of $53 \times 53$ for the case $k=100$.
The second order convergence can be seen in Fig.~\ref{exam4-f2} for both cases although the error is
almost two magnitude larger for the much more anisotropic case with $k=100$.


\begin{figure}
\begin{center}
\includegraphics[scale=0.5]{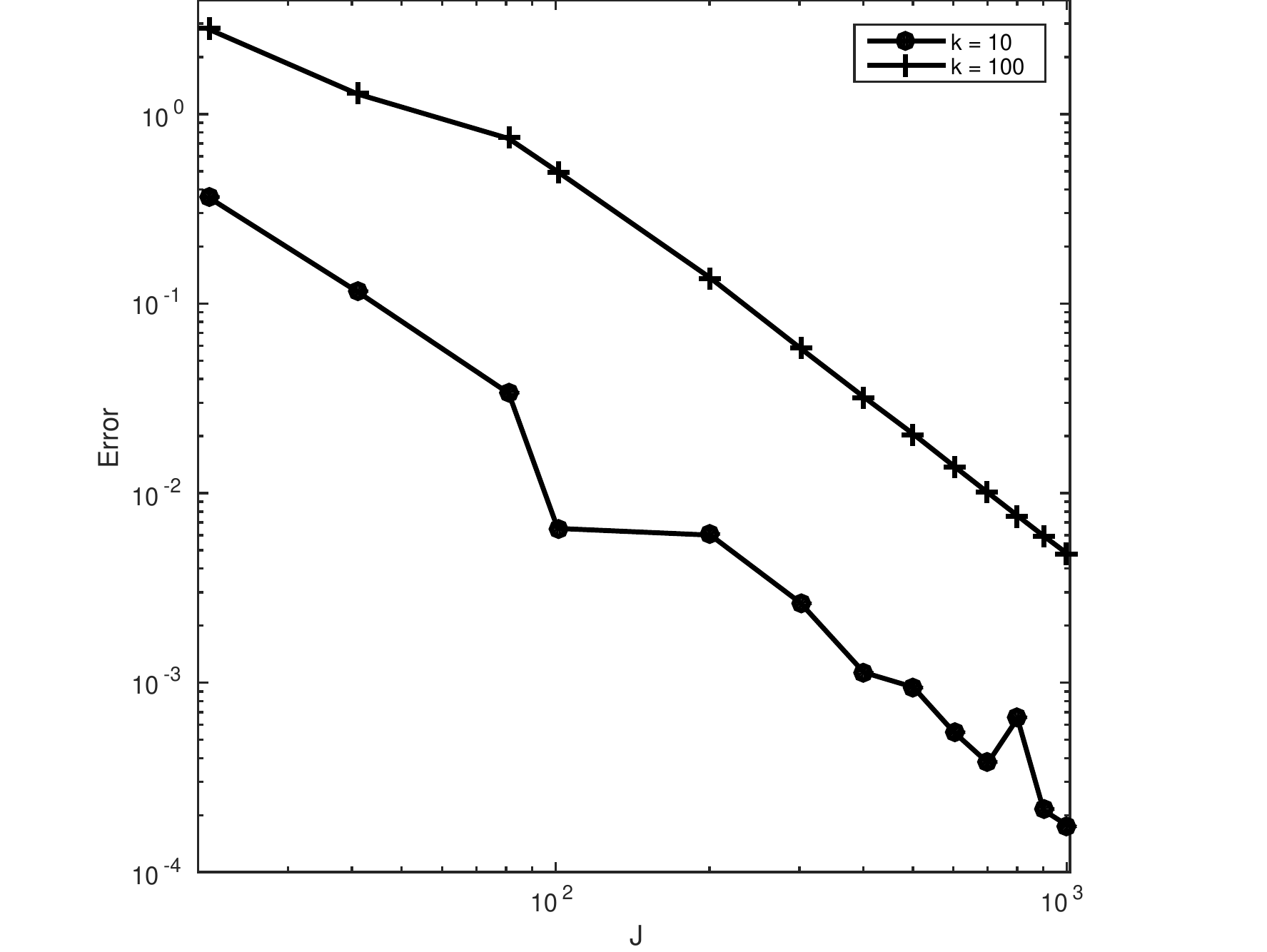}
\caption{Example \ref{exam4}. The convergence history for the maximum norm of the error of the FD solution.}
\label{exam4-f2}
\end{center}
\end{figure}

\end{exam}

\section{Conclusions}
\label{sec:conclude}

In the previous sections we have studied the construction of monotone finite difference schemes
for the divergence form (\ref{had}) using nonnegative directional splittings. The main results are stated in Theorems~\ref{thm1},
\ref{thm3.1}, and \ref{thm3.2}. Theorem~\ref{thm1} states that the diffusion operator in (\ref{had})
has a nonnegative directional splitting at any arbitrary interior point (cf. (\ref{splitting-1}))
when the diffusion matrix $\mathbb{D}$ is continuous
on $\overline{\Omega}$ and symmetric and uniformly positive definite in $\Omega$. This splitting
holds in a neighborhood of the point in the sense that the corresponding coefficients are nonnegative
in the neighborhood. Moreover, the size of the neighborhood depends only on $D$. This result
is an important improvement over that in \cite{Weickert} where the coefficients of the splitting
are nonnegative only at a point. Another improvement is that the function $b(x,y)$ in (\ref{eqn:D})
is allowed to change sign on $\Omega$. These improvements are crucial to the construction of monotone
finite difference schemes for (\ref{had}) since any finite difference discretization for the divergence form
(\ref{had}) requires information of the coefficients in a neighborhood of a mesh point.

The construction of monotone finite difference schemes using the nonnegative directional splitting (\ref{splitting-1})
has been discussed in \S\ref{sec:FD}. A key to the construction is to define principal directions for the underlying
finite difference stencil \cite{Weickert}. We have shown that some of these principal directions can be used
as the splitting directions when the mesh spacing is sufficiently small and the size of finite difference stencil
is sufficiently large. Note that the stencil size has a uniform bound determined by $\mathbb{D}$; see (\ref{m-1}).
This bound can become large when $\mathbb{D}$ is strongly anisotropic. In that case, not only
a large stencil but a very fine mesh have to be used. In such a situation, the constructed finite difference
scheme may be impractical and a more sophisticated way to construct monotone finite difference schemes
may be needed, which is a research topic worth more investigations in the future.
On the other hand, Theorem~\ref{thm3.2} shows that a $3\times 3$ stencil can be used
to construct monotone finite difference schemes when $\mathbb{D}$ is strictly diagonally dominant.
Numerical results in \S\ref{sec:num} are consistent with the theoretical findings.


\begin{thebibliography}{10}

\bibitem{Barles}
G.~Barles and P.~Souganidis.
\newblock Convergence of approximation schemes for fully nonlinear second order
  equations.
\newblock {\em Asymptotic Analysis}, 4(3):271--283, 1991.

\bibitem{Bramble1969}
J.~H. Bramble, B.~Hubbard, and V.~Thom\'{e}e.
\newblock Convergence estimates for essentially positive type discrete
  dirichlet problems.
\newblock {\em Math. Comp.}, 23:695--709, 1969.

\bibitem{Ciarlet1970}
P.~Ciarlet.
\newblock Discrete maximum principle for finite difference operators.
\newblock {\em Aequationes Math.}, 4:338--352, 1970.

\bibitem{CiarletRaviart1973}
P.~Ciarlet and P.-A. Raviart.
\newblock Maximum principle and uniform convergence for the finite element
  method.
\newblock {\em Comput. Meth. Appl. Mech. Eng.}, 2:17--31, 1973.

\bibitem{Crumpton1995}
P.~I. Crumpton, G.~J. Shaw, and A.~F. Ware.
\newblock Discretisation and multigrid solution of elliptic equations with
  mixed derivative terms and strongly discontinuous coefficients.
\newblock {\em J. Comput. Phys.}, 116:343--358, 1995.

\bibitem{Draganescu}
A.~Dr\v{a}g\v{a}nescu, T.~F. Dupont, and L.~R. Scott.
\newblock Failure of the discrete maximum principle for an elliptic finite
  element problem.
\newblock {\em Math. Comp.}, 74:1--23, 2004.

\bibitem{EAK01}
T.~Ertekin, J.~H. Abou-Kassem, and G.~R. King.
\newblock {\em Basic Applied Reservoir Simulation}.
\newblock SPE textbook series, Vol. 7, Richardson, Texas, 2001.

\bibitem{Greenspan1965}
D.~Greenspan and P.~C. Jain.
\newblock On non-negative difference analogues of elliptic differential
  equations.
\newblock {\em J. Franklin Inst.}, 279:360--365, 1965.

\bibitem{GL09}
S.~G\H{u}nter and K.~Lackner.
\newblock A mixed implicit-explicit finite difference scheme for heat transport
  in magnetised plasmas.
\newblock {\em J. Comput. Phys.}, 228:282--293, 2009.

\bibitem{Gunter2007}
S.~G\"{u}nter, K.~Lackner, and C.~Tichmann.
\newblock Finite element and higher order difference formulations for modelling
  heat transport in magnetised plasmas.
\newblock {\em J. Comput. Phys.}, 226:2306--2316, 2007.

\bibitem{Gunter2005}
S.~G\"{u}nter, Q.~Yu, J.~Kruger, and K.~Lackner.
\newblock Modelling of heat transport in magnetised plasmas using non-aligned
  coodinates.
\newblock {\em J. Comput. Phys.}, 209:354--370, 2005.

\bibitem{KM09}
D.~A. Karras and G.~B. Mertzios.
\newblock New {PDE}-based methods for image enhancement using {SOM} and
  {B}ayesian inference in various discretization schemes.
\newblock {\em Meas. Sci. Technol.}, 20:104012, 2009.

\bibitem{Kuzmin2009}
D.~Kuzmin, J.~J. Shashkov, and D.~Svyatskiy.
\newblock A constrained finite element method satisfying the discrete maximum
  principle for anisotropic diffusion problems.
\newblock {\em J. Comput. Phys.}, 228:3448--3463, 2009.

\bibitem{Huang1}
X.~P. Li and W.~Huang.
\newblock An anisotropic mesh adaptation method for the finite element solution
  of heterogeneous anisotropic diffusion problems.
\newblock {\em J. Comput. Phys.}, 229:8072--8094, 2010.
\newblock (arXiv:1003.4530).

\bibitem{Huang2}
X.~P. Li and W.~Huang.
\newblock Maximum principle for the finite element solution of time dependent
  anisotropic diffusion problems.
\newblock {\em Numer Meth. P. D. E.}, 29:1963--1985, 2013.
\newblock (arXiv:1209.5657).

\bibitem{Li2007}
X.~P. Li, D.~Svyatskiy, and M.~Shashkov.
\newblock Mesh adaptation and discrete maximum principle for 2d anisotropic
  diffusion problems.
\newblock Technical report, LANL, 2007.
\newblock Final Report of the Summer Student Program.

\bibitem{Lipnikov2007}
K.~Lipnikov, M.~Shashkov, D.~Svyatskiy, and Y.~Vassilevski.
\newblock Monotone finite volume schemes for diffusion equations on
  unstructured triangular and shape-regular polygonal meshes.
\newblock {\em J. Comput. Phys.}, 227:492--512, 2007.

\bibitem{Liska2008}
R.~Liska and M.~Shashkov.
\newblock Enforcing the discrete maximum principle for linear finite element
  solutions of second-order elliptic problems.
\newblock {\em Comm. Comput. Phys.}, 3:852--877, 2008.

\bibitem{Mlacnik2006}
M.~J. Mlacnik and L.~J. Durlofsky.
\newblock Unstructured grid optimizatioin for improved monotonicity of discrete
  solutions of elliptic equations with highly anisotropic cofficients.
\newblock {\em J. Comput. Phys.}, 216:337--361, 2006.

\bibitem{MD06}
M.~J. Mlacnik and L.~J. Durlofsky.
\newblock Unstructured grid optimization for improved monotonicity of discrete
  solutions of elliptic equations with highly anisotropic coefficients.
\newblock {\em J. Comput. Phys.}, 216:337--361, 2006.

\bibitem{Motzkin1953}
T.~S. Motzkin and W.~Wasow.
\newblock On the approximation of linear elliptic differential equations by
  difference equations with positive coefficients.
\newblock {\em J. Math. Physics}, 31:253--259, 1953.

\bibitem{Mrazek}
P.~Mr\'{a}zek and M.~Navara.
\newblock Consistent positive directional splitting of anisotropic diffusion.
\newblock In B.~Likar, editor, {\em Proc. of Computer Vision Winter Workshop},
  pages 37--48, Bled, Slovenia, February 2001.

\bibitem{Oberman}
A.~Oberman.
\newblock Convergent difference schemes for degenerate elliptic and parabolic
  equations: Hamilton-jacobi equations and free boundary problems.
\newblock {\em SIAM J. Numer. Anal}, 44(2):879--895, 2006.

\bibitem{LePotier2005}
C.~L. Potier.
\newblock Sch\'{e}ma volumes finis monotone pour des op\'{e}rateurs de
  diffusion fortement anisotropes sur des maillages de triangles non
  structur\'{e}s.
\newblock {\em C. R. Math. Acad. Sci. Paris}, 341:787--792, 2005.

\bibitem{LePotier2009-1}
C.~L. Potier.
\newblock A nonlinear finite volume scheme satisfying maximum and minimum
  principles for diffusion operators.
\newblock {\em Int. J. Finite Vol.}, 6:20, 2009.

\bibitem{LePotier2009}
C.~L. Potier.
\newblock Un sch\'{e}ma lin\'{e}aire v\'{e}rifiant le principe du maximum pour
  des op\'{e}rateurs de diffusion tr\`{e}s anisotropes sur des maillages
  d\'{e}form\'{e}s.
\newblock {\em C. R. Math. Acad. Sci. Paris}, 347:105--110, 2009.

\bibitem{Sharma2007}
P.~Sharma and G.~W. Hammett.
\newblock Preserving monotonicity in anisotropic diffusion.
\newblock {\em J. Comput. Phys.}, 227:123--142, 2007.

\bibitem{Stoyan1982}
G.~Stoyan.
\newblock On a maximum principle for matrices, and on conservation of
  monotonicity, with applications to discretization methods.
\newblock {\em ZAMM}, 62:375--381, 1982.

\bibitem{Stoyan1986}
G.~Stoyan.
\newblock On maximum principles for monotone matrices.
\newblock {\em Lin. Alg. Appl.}, 78:147--161, 1986.

\bibitem{Varga}
R.~Varga.
\newblock {\em Matrix Iterative Analysis}.
\newblock Prentice-Hall, New Jersey, 1962.

\bibitem{Weickert}
J.~Weickert.
\newblock {\em Anisotropic Diffusion in Image Processing}.
\newblock B.G Teubner Stuttgart, 1998.

\bibitem{YS08}
G.~Yuan and Z.~Sheng.
\newblock Monotone finite volume schemes for diffusion equations on polygonal meshes.
\newblock {\em J. Comput. Phys.}, 227:6288--6312, 2008.

\bibitem{Shu13}
Y.~Zhang, X.~Zhang, and C.-W.~Shu.
\newblock Maximum-principle-satisfying second order discontinuous Galerkin schemes for convection-diffusion
equations on triangular meshes.
\newblock {\em J. Comput. Phys.}, 234:295--316, 2013.

\end{thebibliography}
\end{document}